\documentclass[12pt,reqno]{amsart}
\usepackage{amssymb}
\usepackage{mathtools}
\usepackage{txfonts}
\usepackage{eucal}
\usepackage{extarrows}

\usepackage[all]{xy}

\usepackage{tikz}

\usepackage{graphicx}
\usepackage{float}

\usepackage{xspace}

\usepackage[a4paper,body={16.3cm,22.8cm},centering]{geometry}
\usepackage[colorlinks,final,backref=page,hyperindex,pdftex]{hyperref}

\newcommand{\nc}{\newcommand}
\newcommand{\delete}[1]{}

\nc{\mlabel}[1]{\label{#1}}  
\nc{\mcite}[1]{\cite{#1}}  
\nc{\mref}[1]{\ref{#1}}  
\nc{\mbibitem}[1]{\bibitem{#1}} 

\delete{
\nc{\mlabel}[1]{\label{#1}  
{\hfill \hspace{1cm}{\small\tt{{\ }\hfill(#1)}}}}
\nc{\mcite}[1]{\cite{#1}{\small{\tt{{\ }(#1)}}}}  
\nc{\mref}[1]{\ref{#1}{{\tt{{\ }(#1)}}}}  
\nc{\mbibitem}[1]{\bibitem[\bf #1]{#1}} 
}
\newtheorem{theorem}{Theorem}[section]
\newtheorem{prop}[theorem]{Proposition}
\newtheorem{lemma}[theorem]{Lemma}
\newtheorem{coro}[theorem]{Corollary}

\theoremstyle{definition}
\newtheorem{defn}[theorem]{Definition}
\newtheorem{remark}[theorem]{Remark}
\newtheorem{exam}[theorem]{Example}
\newtheorem{prop-def}{Proposition-Definition}[section]

\newcommand\cal[1]{\mathcal{#1}}

\newcommand\alphlist{a,b,c,d,e,f,g,h,i,j,k,l,m,n,o,p,q,r,s,t,u,v,w,x,y,z}
\newcommand\Alphlist{A,B,C,D,E,F,G,H,I,J,K,L,M,N,O,P,Q,R,S,T,U,V,W,X,Y,Z}
\newcommand\getcmds[3]{\expandafter\newcommand\csname #2#1\endcsname{#3{#1}}}
\makeatletter
\@for\x:=\alphlist\do{\expandafter\getcmds\expandafter{\x}{frak}{\mathfrak}}
\@for\x:=\Alphlist\do{\expandafter\getcmds\expandafter{\x}{frak}{\mathfrak}}
\makeatother

\nc{\bfk}{{\bf k}}
\font\cyr=wncyr10

\newfont{\scyr}{wncyr10 scaled 550}
\nc{\sha}{\mbox{\cyr X}}
\nc{\ssha}{\mbox{\bf \scyr X}}

\nc{\id}{\mathrm{id}}
\nc{\Id}{\mathrm{Id}}
\nc{\lbar}[1]{\overline{#1}}
\nc{\ot}{\otimes}
\nc{\dep}{\mathrm{dep}}

\nc{\tred}[1]{\textcolor{red}{#1}} \nc{\tgreen}[1]{\textcolor{green}{#1}}
\nc{\tblue}[1]{\textcolor{blue}{#1}} \nc{\tpurple}[1]{\textcolor{purple}{#1}}

\nc{\li}[1]{\tpurple{\underline{Li:}#1 }}
\nc{\liadd}[1]{\tpurple{#1}}
\nc{\xing}[1]{\tblue{\underline{Xing:}#1 }}
\nc{\dominique}[1]{\tblue{\underline{Dominique: }#1 }}
\nc{\yuan}[1]{\tred{\underline{Yuan:}#1 }}
\nc{\markus}[1]{\tred{\underline{Markus:} #1}}
\nc\hu[1]{\tgreen{\underline{Huhu:}#1}}

\usetikzlibrary{calc}
\newlength\xch
\xch=0.50cm%
\newsavebox\dbox
\sbox\dbox{\tikz{\fill (0,0) circle (0.05cm);}}
\newif\ifqdd
\newif\ifzdd
%

%
%
%
%
%
%
%
%
%






\nc{\dnx}{\Delta_n A} \nc{\dx}{\Delta A} \nc{\dgp}{{\rm deg_{P}}}
\nc{\dgt}{{\rm deg_{T}}} \nc{\dg}{{\rm deg}} \nc{\ida}{ID($A$)} \nc{\tu}{\tilde{u}} \nc{\tv}{\tilde{v}}
\nc{\nr}{\calr_n} \nc{\nz}{\calz_n} \nc{\fun}{\cala_{n,d}}
 \nc{\fbase}{\calb} \nc{\LF}{\mathrm{RF}} \nc{\FFA}{\mathrm{LF}} \nc{\irr}{\mathrm{Irr}}
 \nc{\result}{\bfk\mathrm{Irr}(S_n)}  \nc{\I}{I_{\mathrm{ID},n}^0}
 \nc{\nrs}{\calr_n^\star} \nc{\ii}{\mathrm{I}} \nc{\iii}{\mathrm{II}}
\nc{\intl}{{\rm int}}\nc{\ws}[1]{{#1}}\nc{\deleted}[1]{\delete{#1}}\nc{\plas}{placements\xspace}

\nc{\bim}[1]{#1}  \nc{\shaop}{\sha_{\Omega}^{+}}  \nc{\shao}{\sha_{\Omega}}
\nc{\bbim}[2]{#1 #2} \nc{\bbbim}[2]{#1,\, #2} \nc{\RBF}{{\rm RBF}}
\nc{\frb}{F_{\RB}} \nc{\shaf}{\ssha_{\tiny{\Omega}}} \nc{\sham}{\diamond_{\tiny{\Omega}}}
\nc{\lf}{\lfloor} \nc{\rf}{\rfloor} \nc{\shan}{\ssha_{\lambda}}
\nc{\rlex}{{\rm {lex}}} \nc{\bb}{\Box} \nc{\ra}{\rightarrow}
\nc{\e}{{\rm {e}}}
\nc{\DDF}{\mathrm{DD}(X,\,\Omega)}\nc{\DTF}{\mathrm{DT}(X,\,\Omega)} \nc{\DT}{\mathrm{DT}'(\Omega,\,V)}
\nc{\bra}{\mathrm{bra}} \nc{\bre}{\mathrm{bre}}
\nc{\dec}{\mathrm{dec}} \nc{\diamondw}{\diamond_{w}}
\nc{\type}{\mathrm{type}}

\nc\caF[1]{\cal{F}_{#1}(X,\,\Omega)}
\nc\calt{\cal{T}(X,\,\Omega)} \nc\caltn{\cal{T}_n(X,\,\Omega)}
\nc\caltbin{\cal{T}_b(X,\,\Omega)}
\nc\calta{\cal{T}_0(X,\,\Omega)}
\nc\caltb{\cal{T}_1(X,\,\Omega)}
\nc\caltc{\cal{T}_2(X,\,\Omega)}
\nc\caltd{\cal{T}_3(X,\,\Omega)}
\nc\caltm{\cal{T}_m(X,\,\Omega)}
\nc\calf{\cal{F}(X,\,\Omega)}
\nc\fram{\frak{M}(\Omega,\, X)}
\nc\shaw{\sha^{NC}_w(\Omega,\, X)}
\nc\dw{\diamond_w} \nc\dl{\diamond_\ell}
\nc\shal{\sha^{NC}_\ell(X,\, \Omega)} \nc\shav{\sha^{NC}_w(\Omega,\, V)} \nc\shat{\sha^{NC,1}_w(\Omega,\, T^{+}(V))}
\nc{\cfo}{\cal{F}(X,\,\Omega)}

\nc{\lar}{\varinjlim}
\nc\XO{(X,\,\Omega)}
\def\cxo#1#2;{\cal{#1}#2\XO}
\def\cxob#1#2;{\cal{#1}#2_b\XO}
\nc\lrf[2]{B_{#2}^+(#1)}
\nc{\fd}{\mathrm{\text{typed angularly decorated planar rooted trees}}}
\nc{\rb}{\mathrm{RBFWs}} \nc{\dfw}{\mathrm{DFW{(X)}}} \nc{\tfw}{\mathrm{TFW{(X)}}}
\nc{\tfv}{\mathrm{TFW{(V)}}} \nc{\rbf}{\mathrm{RBF}}
\nc{\db}{\mathrm{db}}
\nc{\st}{\mathrm{st}}

\def\Ve#1,#2,#3;{\vee_{#1,\,(#2,\,#3)}}
\def\bigv#1;#2;#3;{\bigvee\nolimits_{#1}^{#2;\,#3}}

\nc{\Irr}{\mathrm{Irr}} \nc{\lc}{\lfloor} \nc{\rc}{\rfloor}
\nc{\rswx}{\frak{M}( \Omega_R\sqcup \Omega_S, X)}
\nc{\rswxs}{\frak{M}^\star( \Omega_R\sqcup \Omega_S, X)}
\nc{\Dl}{\leq_{_{{\rm Dl}}}} \nc{\Dll}{<_{_{{\rm Dl}}}} \nc{\bbs}{\mathbb{S}}
\nc{\orbsa}{$\Omega$-Rota-Baxter system\xspace}
\nc{\orbsas}{$\Omega$-Rota-Baxter systems\xspace}
\nc{\mrbs}{matching Rota-Baxter system\xspace}
\nc{\mrbss}{matching Rota-Baxter systems\xspace}

\nc\prbsla[4]{{R}_{#1}\left(#3\right)R_{#2}\left(#4\right)}
\nc\prbsra[4]{{R}_{#1\rightarrow#2}\left(R_{#1\rhd#2}\left(#3\right)#4\right)+{R}_{#1\leftarrow#2}\left(#3S_{#1\lhd#2}\left(#4\right)\right)}
\nc\prbslb[4]{{S}_{#1}\left(#3\right)S_{#2}\left(#4\right)}
\nc\prbsrb[4]{{S}_{#1\rightarrow#2}\left(R_{#1\rhd#2}\left(#3\right)#4\right)+{S}_{#1\leftarrow#2}\left(#3S_{#1\lhd#2}\left(#4\right)\right)}

\nc\rbsla[4]{\lc #3 \rc ^{R}_{#1} \lc #4 \rc ^R_{#2}}
\nc\rbslb[4]{\lc #3\rc ^{S}_{#1}  \lc #4 \rc ^S_{#2}}
\nc\rbsray[4]{\lc \lc #3 \rc^R_{#1\rhd#2} #4 \rc ^{R}_{#1\rightarrow#2}}
\nc\rbsraz[4]{\lc #3 \lc #4\rc^S_{#1\lhd#2}\rc ^{R}_{#1\leftarrow#2}}
\nc\rbsrby[4]{\lc \lc #3\rc ^R_{#1\rhd#2}#4\rc ^{S}_{#1\rightarrow#2}}
\nc\rbsrbz[4]{\lc #3 \lc #4 \rc ^S_{#1\lhd#2}\rc ^{S}_{#1\leftarrow#2}}
\nc\rbsrac[4]{\lc #3 \lc #4\rc^R_{#1\lhd#2}\rc ^{R}_{#1\leftarrow#2}}

\nc\rbslq[4]{\lc #3 \rc ^{Q}_{#1} \lc #4 \rc ^Q_{#2}}
\nc\rbslt[4]{\lc #3\rc ^{T}_{#1}  \lc #4 \rc ^T_{#2}}
\nc\rbsrqy[4]{\lc \lc #3 \rc^R_{#1\rhd#2} #4 \rc ^{Q}_{#1\rightarrow#2}}
\nc\rbsrqz[4]{\lc #3 \lc #4\rc^S_{#1\lhd#2}\rc ^{Q}_{#1\leftarrow#2}}
\nc\rbsrty[4]{\lc \lc #3\rc ^R_{#1\rhd#2}#4\rc ^{T}_{#1\rightarrow#2}}
\nc\rbsrtz[4]{\lc #3 \lc #4 \rc ^S_{#1\lhd#2}\rc ^{T}_{#1\leftarrow#2}}

\nc{\obr}[1]{\lc #1 \rc_\omega^R} \nc{\obs}[1]{\lc #1 \rc_\omega^S} \nc{\obq}[1]{\lc #1 \rc_\omega^*}
\nc{\obqa}[1]{\lc #1 \rc_\alpha^*} \nc{\obqb}[1]{\lc #1 \rc_\beta^*}
\nc{\obra}[1]{\lc #1 \rc_\alpha^R} \nc{\obrb}[1]{\lc #1 \rc_\beta^R}
\nc{\obsa}[1]{\lc #1 \rc_\alpha^S} \nc{\obsb}[1]{\lc #1 \rc_\beta^S}

\begin{document}

\title[Free $\Omega$-Rota-Baxter systems and Gr\"obner-Shirshov bases]{Free $\Omega$-Rota-Baxter systems and Gr\"obner-Shirshov bases}

\author{Yuanyuan Zhang}
\address{School of Mathematics and Statistics, Henan University, Henan, Kaifeng 475004, P.\,R. China}
\email{zhangyy17@henu.edu.cn}
\author{Huhu Zhang} \address{School of Mathematics and Statistics,
Lanzhou University, Lanzhou, 730000, P. R. China}
\email{zhanghh20@lzu.edu.cn}
\author{Xing Gao}
\address{School of Mathematics and Statistics,
Key Laboratory of Applied Mathematics and Complex Systems,
Lanzhou University, Lanzhou, 730000, P.R. China}
\email{gaoxing@lzu.edu.cn}

\date{\today}

\begin{abstract}
In this paper, we propose the concept of an $\Omega$-Rota-Baxter system, which is a generalization of a Rota-Baxter system and an $\Omega$-Rota-Baxter algebra of weight zero. In the framework of operated algebras, we obtain a linear basis of a free $\Omega$-Rota-Baxter system for an extended diassociative semigroup $\Omega$, in terms of bracketed words and the method of Gr\"obner-Shirshov bases.
As applications, we introduce the concepts of Rota-Baxter system family algebras and matching Rota-Baxter systems as special cases of $\Omega$-Rota-Baxter systems, and construct their free objects. Meanwhile, free $\Omega$-Rota-Baxter algebras of weight zero, free Rota-Baxter systems, free Rota-Baxter family algebras and free matching Rota-Baxter algebras are reconstructed via new method.
\end{abstract}

\subjclass[2010]{
16W99, 
16S10, 
13P10, 
08B20, 
}

\keywords{Rota-Baxter family algebras, Rota-Baxter systems, $\Omega$-Rota-Baxter systems, Gr\"obner-Shirshov bases.}

\maketitle

\tableofcontents

\setcounter{section}{0}

\allowdisplaybreaks

\section{Introduction}
The concept of algebras with multiple linear operators (called $\Omega$-algebra) was first introduced by A. G. Kurosch in ~\cite{Kur} and there the author noticed that the free $\Omega$-algebra carries lots of combinatorial properties. Here $\Omega$ is a set to index the family of linear operators. As a key example of an $\Omega$-algebra, Rota-Baxter algebra (first called Baxter algebra) was introduced by Baxter~\cite{Bax} in his study on probability. Later some combinatoric properties of Rota-Baxter algebras were studied by Rota~\cite{Rot69} and Cartier~\cite{Car}.
Let $\bfk$ be a commutative ring and $\lambda\in \bfk$. A Rota-Baxter algebra of weight $\lambda$ is an associative $\bfk$-algebra with a Rota-Baxter operator $R:A\ra A$ satisfying
\[R(a)R(b)=R(aR(b)+R(a)b+\lambda a b),\,\text{ for }\, a,b\in A.\]
In particular, some scholars paid attention to the constructions of free Rota-Baxter algebras~\cite{Car, EFG08,  GK2, GK3, Guo12,Rot69}.
In recent years, $\Omega$-algebras have been studied extensively~\cite{Agu20, BoCh, Foi18, Foi20, FP, GGZ21, GZ, Guo09,ZGG,  ZGM, ZM}.\\

A Rota-Baxter system is a special $\Omega$-algebra with two linear operators, introduced by T. Brzezi\'{n}ski~\cite{Bre18} as an extension of Rota-Baxter algebra of weight $0$. There T. Brzezi\'{n}ski showed that dendriform algebra structures of a particular kind are equivalent to Rota-Baxter systems, and then he obtained that a Rota-Baxter system induces a weak pseudotwistor introduced by F. Panaite and F. V. Oystaeyen~\cite{PF}, which can be held responsible for the existence of a new associative product on the underlying algebra. J. J. Qiu and Y. Q. Chen~\cite{QC} obtained a linear basis of a free Rota-Baxter system on a set by using the method of Gr\"obner-Shirshov bases.\\

The Gr\"obner-Shirshov bases theory for Lie algebras was introduced by A. I. Shirshov~\cite{Shirshov}, in which the author defined the composition of two Lie polynomials and proved the Composition-Diamond lemma for the Lie algebras. Later, L. A. Bokut~\cite{b76} generalized the approach of Shirshov to associative algebras, see also G. M. Bergman~\cite{Bergman}. For commutative polynomials, this lemma is known as the Buchberger's Theorem~\cite{Buch65, Buch70}. Kurosh~\cite{Kur1} showed that any subalgebra of a free non-associative algebra is again free (the Nielsen-Schreier property).  Drensky and Holtkamp~\cite{DH} proved an analogue of the above Shirshov-Zhukov's Composition-Diamond lemma to free algebras for the case where all operations having aryties $2$.\\

Recently, L. Foissy and X. S. Peng~\cite{FP} introduced the concept of $\Omega$-Rota-Baxter algebras, which include Rota-Baxter family algebras and matching Rota-Baxter algebras as examples. In order to obtain free $\Omega$-Rota-Baxter algebras, they introduced  the notion of $\lambda$-extended diassociative semigroups, containing sets (for matching Rota-Baxter algebras) and semigroups (for Rota-Baxter family algebras) as special cases.\\

Along this line, in the present paper, we propose the concept of $\Omega$-Rota-Baxter systems, which include Rota-Baxter systems and $\Omega$-Rota-Baxter algebras of weight zero as special cases. The free object is obtained in terms of bracketed words and the method of Gr\"obner-Shirshov bases. As applications, the notations of Rota-Baxter system family algebras and matching Rota-Baxter systems are also introduced, and their free objects are obtained via Gr\"obner-Shirshov bases.
In particular, free $\Omega$-Rota-Baxter algebras of weight zero, free Rota-Baxter systems, free Rota-Baxter famlily algebras and free matching Rota-Baxter algebras are reconstructed. \\

The paper is organized as follows. In Section~\ref{sec:GS-bases}, we first propose the concept of an $\Omega$-Rota-Baxter system and give some examples. Then we construct an explicit monomial order (Proposition~\mref{pp:mord}) and obtain a linear basis of the free $\Omega$-Rota-Baxter system on a set $X$ (Theorem~\mref{thm:gsb}) for an extended diassociative semigroup $\Omega$, by applying the method of Gr\"obner-Shirshov bases.
As a direct consequence, we obtain a linear basis of the free $\Omega$-Rota-Baxter algebra of weight $0$ (Corollary~\mref{coro:forba}). Section~\ref{sec:application} is devoted to applications of the main result in Section~\mref{sec:GS-bases}.
We first propose the concept of a Rota-Baxter system family algebra as a special case of the $\Omega$-Rota-Baxter system, and construct its free object (Proposition~\mref{prop:RBF}). New methods to reconstruct free Rota-Baxter systems (Proposition~\mref{prop:frbs}) and free Rota-Baxter family algebras (Proposition~\mref{prop:gsbases}) are also given.
Second, we introduce the notion of a matching Rota-Baxter system as an example of an $\Omega$-Rota-Baxter system and build its free object (Proposition~\mref{prop:fmrb}). Finally, we reconstruct the free matching Rota-Baxter algebra (Proposition~\mref{prop:fmrba}).

\smallskip
{\bf Notation.}
Throughout this paper, we fix a commutative unitary ring $\bfk$,
which will be the base ring of all modules, algebras, tensor products, as well as linear maps. By an algebra we mean a unitary associative algebra, unless the contrary is specified.

\section{Free $\Omega$-Rota-Baxter systems}
\mlabel{sec:GS-bases}
In this section, we propose the concept of an $\Omega$-Rota-Baxter system and
construct its free object via the method of Gr\"obner-Shirshov bases.

\subsection{$\Omega$-Rota-Baxter systems}
In this subsection, we mainly define the notation of an $\Omega$-Rota-Baxter system and give some examples, which is simultaneously a generalization of a Rota-Baxter system~\cite{Bre18} and an $\Omega$-Rota-Baxter algebra of weight zero~\cite{FP}.
Let us first review these two concepts.

The concept of a Rota-Baxter system can help to understand the Jackson $q$-integral as a Rota-Baxter operator. It also extends~\cite{Bre18} the connections between three algebraic systems: Rota-Baxter algebras~\cite{Rot69}, dendriform algebras~\cite{Lod01} and infinitesimal bialgebras~\cite{Agu99}.

\begin{defn}\cite{Bre18}
 A triple $(A, R, S)$ consisting of an algebra $A$ and two linear operators $R, S: A \rightarrow A$ is called a {\bf Rota-Baxter system} if, for all $a,b \in A$,
\begin{align*}
R(a) R(b)=&\ R(R(a) b+a S(b)), \\
S(a) S(b)=&\ S(R(a) b+a S(b)).
\end{align*}
\mlabel{defn:mrba}
\end{defn}
\vskip-0.3in

The following is the notation of $\Omega$-Rota-Baxter algebras.

\begin{defn}\cite{FP}
Let $\Omega$ be a nonempty set equipped with five binary operations
$$\leftarrow,\rightarrow,\lhd,\rhd, \cdot: \Omega\times \Omega \rightarrow \Omega.$$
Let
$\lambda_{\Omega^2} := (\lambda_{\alpha,\,\beta})_{\alpha,\,\beta\in\Omega}$ be a collection of elements in $\bfk$.
A pair $(A, (R_{\omega})_{\omega\in\Omega})$ consisting of an algebra $A$ and a collection of linear operators $R_{\omega}: A \rightarrow A$, $\omega\in \Omega$ is called an {\bf $\Omega$-Rota-Baxter algebra of weight $\lambda_{\Omega^2}$ } if, for all $a,b \in A$,
$$R_\alpha(a)R_\beta(b)=R_{\alpha\rightarrow \beta}(R_{\alpha\rhd\beta}(a)b)+R_{\alpha\leftarrow\beta}(aR_{\alpha\lhd\beta}(b))
+\lambda_{\alpha,\,\beta} R_{\alpha\cdot\beta}(ab).$$
\end{defn}

Combining the above two notations, we propose the following concept studied in this paper.

\begin{defn}
Let $\Omega$ be a nonempty set equipped with four binary operations
$$\leftarrow,\rightarrow,\lhd,\rhd: \Omega\times \Omega \rightarrow \Omega.$$
A pair $(A, (R_\omega, S_\omega)_{\omega\in\Omega})$ consisting of an algebra $A$ and two collections of linear operators $R_\omega, S_\omega: A \rightarrow A$, $\omega\in \Omega$ is called an {\bf \orbsa} if, for all $a, b \in A$,
\begin{align}
\mlabel{eq:rbsy1} \prbsla{\alpha}{\beta}{a}{b}&=\prbsra{\alpha}{\beta}{a}{b}, \\
\mlabel{eq:rbsy2}\prbslb{\alpha}{\beta}{a}{b}&=\prbsrb{\alpha}{\beta}{a}{b}.
\end{align}
\mlabel{defn:omegarbsystem}
\end{defn}
\begin{remark}
In an \orbsa $(A, (R_\omega, S_\omega)_{\omega\in\Omega})$, if $R_{\omega}=S_\omega$ for $\omega\in \Omega$,
then we recover the definition of an $\Omega$-Rota-Baxter algebra of weight zero.
\mlabel{rk:orbs2orb}
\end{remark}

Extended diassociative semigroups can be used to study Gr\"{o}bner-Shirshov bases for \orbsas.

\begin{defn}\label{defn:diassociative semigroups}\mcite{Foi20,FP}
Let $\Omega$ be a nonempty set equipped with four binary operations $\leftarrow,\rightarrow,\lhd,\rhd:\Omega\times\Omega\to \Omega$. We say that $\Omega$ is an {\bf extended diassociative semigroup} if, for all $\alpha,\beta,\gamma\in\Omega$,
\allowdisplaybreaks
\begin{eqnarray*}
(\alpha\rightarrow\beta)\rightarrow\gamma&=&\alpha\rightarrow(\beta\rightarrow\gamma),\\
(\alpha\rightarrow\beta)\rhd\gamma&=&(\alpha\rhd(\beta\rightarrow\gamma))\rightarrow(\beta\rhd\gamma),\\
\alpha\rhd\beta&=&(\alpha\rhd(\beta\rightarrow\gamma))\rhd(\beta\rhd\gamma),\\
(\alpha\rightarrow\beta)\leftarrow\gamma&=&\alpha\rightarrow(\beta\leftarrow\gamma),\\
\alpha\rhd(\beta\leftarrow\gamma)&=&\alpha\rhd\beta,\\
(\alpha\rightarrow\beta)\lhd\gamma&=&\beta\lhd\gamma,\\
(\alpha\leftarrow\beta)\rightarrow\gamma&=&\alpha\rightarrow(\beta\rightarrow\gamma),\\
(\alpha\rhd(\beta\rightarrow\gamma))\leftarrow(\beta\rhd\gamma)&=&(\alpha\leftarrow\beta)\rhd\gamma,\\
(\alpha\rhd(\beta\rightarrow\gamma))\lhd(\beta\rhd\gamma)&=&\alpha\lhd\beta,\\
(\alpha\leftarrow\beta)\leftarrow\gamma&=&\alpha\leftarrow(\beta\rightarrow\gamma),\\
(\alpha\lhd\beta)\rightarrow((\alpha\leftarrow\beta)\lhd\gamma)&=&\alpha\lhd(\beta\rightarrow\gamma),\\
(\alpha\lhd\beta)\rhd((\alpha\leftarrow\beta)\lhd\gamma)&=&\beta\rhd\gamma,\\
(\alpha\leftarrow\beta)\leftarrow\gamma&=&\alpha\leftarrow(\beta\leftarrow\gamma),\\
(\alpha\lhd\beta)\leftarrow((\alpha\leftarrow\beta)\lhd\gamma)&=&\alpha\lhd(\beta\leftarrow\gamma),\\
(\alpha\lhd\beta)\lhd((\alpha\leftarrow\beta)\lhd\gamma)&=&\beta\lhd\gamma.\\
\end{eqnarray*}
\end{defn}

Enough examples show the vitality of a new concept. \orbsas include many familiar algebras as special cases.

\begin{exam}\label{exam:omega-system}
Let $(A, (R_\omega,S_\omega)_{\omega\in\Omega})$ be an $\Omega$-Rota-Baxter system.
\begin{enumerate}
\item \label{item:RB2} Let $\lambda\in \bfk$. If $S_\omega = R_{\omega}+\lambda\id$ for $\omega\in \Omega$, then Eqs.~(\ref{eq:rbsy1}-\ref{eq:rbsy2}) turn into

\begin{equation}
\begin{aligned}
R_\alpha(a)R_\beta(b)= R_{\alpha\to\beta}(R_{\alpha\lhd\beta}(a)b)
    +R_{\alpha\leftarrow\beta}(aR_{\alpha\lhd\beta}(b))
    +\lambda R_{\alpha\leftarrow\beta}(ab)
\end{aligned}
\label{eq:rbf1}
\end{equation}
and
\begin{equation}
\begin{aligned}
&\ R_\alpha(a)R_\beta(b)+\lambda R_\alpha(a)b+\lambda aR_\beta(b)+\lambda^2ab\\
=&\ R_{\alpha\rightarrow \beta}(R_{\alpha\rhd\beta}(a)b)
    +\lambda R_{\alpha\rhd\beta}(a)b
    +R_{\alpha\leftarrow\beta}(aR_{\alpha\lhd\beta}(b))\\
   &\ +\lambda R_{\alpha\leftarrow\beta}(ab)
    +\lambda aR_{\alpha\lhd\beta}(b)+\lambda^2ab.
\end{aligned}
\label{eq:rbf2}
\end{equation}
Further, if
$$\alpha\leftarrow\beta=\alpha\rightarrow\beta=:\alpha\cdot\beta\,\text{ and }\, \alpha\lhd\beta=\beta, \alpha\rhd\beta=\alpha,$$
then $(\Omega,\leftarrow,\rightarrow,\lhd,\rhd)$ is an extended diassociative semigroup and Eqs.~(\ref{eq:rbf1}-\ref{eq:rbf2}) degenerate into
\[R_\alpha(a)R_\beta(b)=R_{\alpha\cdot\beta}(R_\alpha(a)b+aR_\beta(b)+\lambda ab).\]
Thus,
$(A, (R_\omega)_{\omega\in \Omega})$ is a Rota-Baxter family algebra~\mcite{ZGM} of weight $\lambda$ with respect to the semigroup $(\Omega, \cdot)$.

\item \label{item:RB3} Let $\lambda_\omega\in \bfk$ for $\omega\in \Omega$. If $S_\omega = R_{\omega} + \lambda_\omega\id$ for each $\omega\in \Omega$,
   then Eqs.~(\ref{eq:rbsy1}-\ref{eq:rbsy2}) become
\begin{equation}
\begin{aligned}
R_\alpha(a)R_\beta(b)= R_{\alpha\to\beta}(R_{\alpha\lhd\beta}(a)b)
    +R_{\alpha\leftarrow\beta}(aR_{\alpha\lhd\beta}(b))
    +\lambda_{\alpha\lhd\beta} R_{\alpha\leftarrow\beta}(ab)
\end{aligned}
\label{eq:rbm1}
\end{equation}
and
\begin{equation}
\begin{aligned}
 S_\alpha(a)S_\beta(b)=&\ ((R_\alpha+\lambda_\alpha \id)(a)) \, ((R_\beta+\lambda_\beta\id)(b)) \\
    =&\ R_\alpha(a)R_\beta(b)+\lambda_\beta R_\alpha(a)b+\lambda_\alpha aR_\beta(b)+\lambda_\alpha\lambda_\beta ab \\
    =&\ R_{\alpha\rightarrow \beta}(R_{\alpha\rhd\beta}(a)b)
    +\lambda_{\alpha\rightarrow\beta} R_{\alpha\rhd\beta}(a)b
    +R_{\alpha\leftarrow\beta}(aR_{\alpha\lhd\beta}(b)) \\
    &\ +\lambda_{\alpha\lhd\beta} R_{\alpha\leftarrow\beta}(ab)
    +\lambda_{\alpha\leftarrow\beta} aR_{\alpha\lhd\beta}(b)+\lambda_{\alpha\leftarrow\beta}\lambda_{\alpha\lhd\beta}
    ab.
\end{aligned}
\label{eq:rbm3}
\end{equation}
Further, if
$$\alpha\rightarrow\beta=\alpha\lhd\beta=\beta\,\text{ and }\, \alpha\leftarrow\beta=\alpha\rhd\beta=\alpha,$$
then $(\Omega,\leftarrow,\rightarrow,\lhd,\rhd)$ is an extended diassociative semigroup and Eqs.~(\ref{eq:rbm1}-\ref{eq:rbm3}) reduce to
\[R_\alpha(a)R_\beta(b)=R_\alpha(aR_\beta(b))+R_\beta(R_\alpha(a)b)+\lambda_\beta R_\alpha(ab).\]
Thus,
$(A, (R_\omega)_{\omega\in \Omega})$ is a matching Rota-Baxter algebra~\mcite{FBP,ZGG} of weight $(\lambda_\omega)_{\omega\in \Omega}$.

\item Define $$a\prec_\alpha b:=aS_\alpha(b)\,\text{ and }\, a\succ_\alpha b:=R_\alpha(a)b.$$
Then we obtain an $\Omega$-dendriform algebra $(A,(\prec_\omega,\succ_\omega)_{\omega\in\Omega})$~\mcite{Foi20, FP}.
\end{enumerate}
\end{exam}

\subsection{Composition-Diamond lemma for free $\Omega$-operated algebras}
\label{sub:operated}
We are going to construct the free \orbsa, in the framework of operated algebras and via the method of Gr\"{o}bner-Shirshov bases.
Let us first recall the Composition-Diamond lemma for free $\Omega$-operated algebras~\cite{BoCh, GG}.

The concept of algebras with (one or more) linear operators was introduced by Kurosh~\cite{Kur}.
Later Guo~\mcite{Guo09} called such algebras operated algebras and constructed the free objects. See also~\cite{BC}.

\begin{defn}\mcite{Guo09}
Let $\Omega$ be a nonempty set.
\begin{enumerate}
\item An {\bf $\Omega$-operated algebra} is an algebra $A$ together with a set of linear operators $R_{\omega}:
A \rightarrow A,\,\omega \in \Omega.$

\item A morphism from an $\Omega$-operated algebra $(A, (R_{\omega})_{\omega \in \Omega})$ to an $\Omega$-operated algebra $(A', (R'_{\omega})_{\omega \in \Omega})$ is {\bf an algebra homomorphism} $f: A \rightarrow A'$ such that $f\circ R_{\omega}= R'_{\omega}\circ f$ for $\omega \in \Omega.$
\end{enumerate}
\end{defn}

The following is the construction of the free $\Omega$-operated algebra on a set $X$.
Denote by $M(X)$ the free monoid generated by $X$.
For any set $Y$ and $\omega\in \Omega$, let $\lfloor Y \rfloor_{\omega}$
denote the set $\big\{\lfloor y\rfloor_{\omega}\mid y\in Y\big\}$.
So $\lfloor Y \rfloor_{\omega}$ is a disjoint copy of $Y$.
Assume that the sets $\lfloor Y \rfloor_{\omega}$ to be disjoint with each other when $\omega$ varies in $\Omega.$
We now use induction to define a direct system $\big\{\frak{M_{n}}=\frak{M_{n}}(\Omega, X), i_{n,\, n+1}: \frak{M_{n}} \rightarrow \frak{M}_{n+1}\big\}_{n\geq 0}$ of free monoids. We first define
\begin{equation*}
\frak{M}_{0}:=M(X)\,\text{ and }\, \mathfrak{M}_{1}:= M\left(X\sqcup (\sqcup_{\omega\, \in \Omega}\lfloor \frak{M}_{0} \rfloor_{\omega})\right),
\end{equation*}
with $i_{0,\,1}$ being the inclusion
\begin{equation*}
i_{0,\,1}: \frak{M}_{0}=M(X)\hookrightarrow  \frak{M}_{1}=M\left(X\sqcup (\sqcup_{\omega\,\in \Omega}\lfloor \frak{M}_{0}\rfloor_{\omega})\right).
\end{equation*}
Inductively assume that $\frak{M}_{n-1}$ has been defined for given $n \geq 2,$ with the inclusion
\begin{equation}
i_{n-2,\,n-1}: \frak{M}_{n-2}\rightarrow \frak{M}_{n-1}.
\mlabel{eq:inclu2}
\end{equation}
We then define
\begin{equation*}
\frak{M}_{n}:=M\left(X\sqcup(\sqcup_{\omega\,\in \Omega}\lfloor \frak{M}_{n-1}\rfloor_{\omega})\right).
\end{equation*}
The inclusion in Eq.~(\mref{eq:inclu2}) induces the inclusion
$$\lfloor \frak{M}_{n-2}\rfloor_{\omega}\rightarrow \lfloor \frak{M}_{n-1}\rfloor_{\omega}, \,\text{ for each }\,\omega\in \Omega,$$
which generates an inclusion  of free monoids
$$i_{n-1,\,n}: \frak{M}_{n-1}=M\left(X\sqcup(\sqcup_{\omega \,\in \Omega}\,\lfloor \frak{M}_{n-2}\rfloor_{\omega})\right)
\hookrightarrow M\left(X\sqcup(\sqcup_{\omega \,\in \Omega}\,\lfloor \frak{M}_{n-1}\rfloor_{\omega})\right)=\frak{M}_{n}.$$
This completes the inductive
construction of the direct systems. Define the direct limit of monoids
$$ \frak{M}(\Omega,\, X) :=\lim_{\longrightarrow}\frak{M}_{n}=\bigcup_{n\geq 0}\frak{M}_{n}$$
with identity $1$.

Let us collect some basic concepts used later.

\begin{defn} Let $X$ be a set and $\Omega$ a nonempty set.
\begin{enumerate}
\item Elements of $\frak{M}_{n}\backslash \frak{M}_{n-1}$ are said to have {\bf depth} $n$.

\item Elements in $\frak{M}(\Omega,\, X)$ (resp. $\bfk\frak{M}(\Omega,\, X)$) are called {\bf bracketed words} (resp. {\bf bracketed polynomials}) on $X$.

\item  If $u\in X\sqcup (\sqcup_{\omega\in\Omega}\lc \frak{M}(\Omega,\, X)\rc_\omega)$, we call $u$ {\bf prime}. For $u=u_1\cdots
u_n\in\frak{M}(\Omega,\, X)$ with each $u_i$ prime, we define the {\bf breadth} $|u|$ of $u$ to be $|u|:=n$.
Here we employ the convention that $|1|:=0$.
\end{enumerate}
\end{defn}

Denote by $\bfk\frak{M}(\Omega,\, X)$ the free module on $\frak{M}(\Omega,\, X)$. Extending by linearity, the multiplication on
$\frak{M}(\Omega,\, X)$ can be extended to $\bfk\frak{M}(\Omega,\, X)$, turning it into an algebra.
For each $\omega\in \Omega$, the operator
$$\lfloor  \, \rfloor_{\omega}:\frak{M}(\Omega,\, X)\rightarrow \frak{M}(\Omega,\, X), \
w\mapsto \lc w\rc_\omega$$
can be extended linearly to a linear operator on $\bfk\frak{M}(\Omega,\, X)$, still denoted by $\lc \, \rc_\omega$.
The $\Omega$-operated algebra $(\bfk\frak{M}(\Omega,\, X),\, \{\lf\, \rf_{\omega} \mid \omega\in \Omega\} )$ is
indeed the free object in the category of $\Omega$-operated algebras.

\begin{lemma} \cite{Guo09}
Let $X$ be a set and $\Omega$ a nonempty set. Let $\iota: X\rightarrow \bfk\frak{M}(\Omega,\, X)$ be the natural embedding.
Then the pair $(\bfk\frak{M}(\Omega,\, X),\, \{\lf\, \rf_{\omega} \mid \omega\in \Omega\} )$, together with the embedding $\iota$, is
the free $\Omega$-operated algebra on $X$.
\end{lemma}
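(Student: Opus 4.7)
The plan is to verify the universal property by constructing the unique morphism inductively on depth, mirroring the inductive construction of $\frak{M}(\Omega, X)$ itself. Given any $\Omega$-operated algebra $(A, (R_\omega)_{\omega\in\Omega})$ and any set map $f: X \to A$, I will produce a unique $\Omega$-operated algebra morphism $\bar{f}: \bfk\frak{M}(\Omega, X) \to A$ such that $\bar{f}\circ \iota = f$.

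For existence, first I would define $\bar{f}$ on $\frak{M}_0 = M(X)$ by invoking the universal property of the free monoid: set $\bar{f}(x_1 \cdots x_n) := f(x_1)\cdots f(x_n)$ (product in $A$), with $\bar{f}(1) := 1_A$, then extend $\bfk$-linearly to $\bfk\frak{M}_0$. Inductively, assume $\bar{f}$ has been defined on $\bfk\frak{M}_{n-1}$ as an algebra homomorphism into $A$. For each prime element of $\frak{M}_n$ of the form $\lfloor w\rfloor_\omega$ with $w\in \frak{M}_{n-1}$, set $\bar{f}(\lfloor w\rfloor_\omega) := R_\omega(\bar{f}(w))$; for a prime element already lying in $X$, retain the previously defined value $f(x)$. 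Since every element of $\frak{M}_n$ factors uniquely as a product of primes (because $\frak{M}_n$ is the free monoid on $X\sqcup (\sqcup_\omega \lfloor \frak{M}_{n-1}\rfloor_\omega)$), the assignment extends uniquely multiplicatively to $\frak{M}_n$, and then $\bfk$-linearly to $\bfk\frak{M}_n$. Compatibility with the inclusion $i_{n-1,n}$ is immediate since the formulas agree on $\frak{M}_{n-1}$, so the family $(\bar{f}_n)_{n\geq 0}$ passes to the direct limit to yield a $\bfk$-linear map $\bar{f}: \bfk\frak{M}(\Omega, X) \to A$.

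Next I would verify that $\bar{f}$ is a morphism of $\Omega$-operated algebras. Multiplicativity $\bar{f}(uv) = \bar{f}(u)\bar{f}(v)$ on $\frak{M}(\Omega, X)$ follows at each depth from the free-monoid structure of $\frak{M}_n$, since factoring $uv$ into primes amounts to concatenating the prime factorizations of $u$ and $v$; this extends bilinearly to $\bfk\frak{M}(\Omega, X)$. Compatibility with operators, $\bar{f}\circ \lfloor\,\rfloor_\omega = R_\omega\circ \bar{f}$, is built into the definition on prime elements and extends linearly. The relation $\bar{f}\circ \iota = f$ holds by construction on $\frak{M}_0$.

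For uniqueness, suppose $g: \bfk\frak{M}(\Omega, X) \to A$ is any $\Omega$-operated algebra morphism with $g\circ \iota = f$. I would show $g = \bar{f}$ by induction on depth and breadth. On depth zero, multiplicativity and the condition $g(x) = f(x) = \bar{f}(x)$ for $x \in X$ force $g = \bar{f}$ on $\bfk\frak{M}_0$. Assuming agreement on $\bfk\frak{M}_{n-1}$, any prime element of $\frak{M}_n\setminus \frak{M}_{n-1}$ has the form $\lfloor w\rfloor_\omega$ with $w\in \frak{M}_{n-1}$, and then $g(\lfloor w\rfloor_\omega) = R_\omega(g(w)) = R_\omega(\bar{f}(w)) = \bar{f}(\lfloor w\rfloor_\omega)$; multiplicativity finishes the induction. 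I expect no real obstacle here: the argument is a clean two-fold induction (on depth, and within each depth on breadth) driven entirely by the free-monoid construction at each level and the defining properties of operated algebra morphisms.
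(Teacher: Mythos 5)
Your proof is correct and is essentially the standard argument: the paper itself omits the proof, citing \cite{Guo09}, where freeness is established by exactly this induction on depth, using the universal property of the free monoid $\frak{M}_{n}$ at each level, passing to the direct limit, and checking uniqueness on prime elements. No gaps; the construction of $\bar{f}$, its compatibility with the inclusions $i_{n-1,n}$, and the uniqueness argument all go through as you describe.
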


The $\star$-bracketed words are used in the theory of Gr\"{o}bner-Shirshov bases.

\begin{defn}
Let $X$ be a set and $\star$ a symbol not in $X$.
\begin{enumerate}
\item By a {\bf $\star$-bracketed word} on $X$, we mean any bracketed word in $\mathfrak{M}(\Omega,\,X\sqcup \{\star\})$ with exactly one occurrence of $\star$, counting multiplicities. The set of all $\star$-bracketed words on $X$ is denoted by $\mathfrak{M}^{\star}(\Omega,\,X)$.

    \item For $q\in \mathfrak{M}^{\star}(\Omega,\,X)$ and $u\in \mathfrak{M}(\Omega,\,X)$, we define $q|_u:= q|_{\star\mapsto u}$ to be the bracketed word on $X$ obtained by replacing the symbol $\star$ in $q$ by $u$.

\item For $q\in \mathfrak{M}^{\star}(\Omega,\,X)$ and $s=\Sigma_ic_iq|_{u_i}\in \bfk\mathfrak{M}(\Omega,\,X),$ where $c_i\in \bfk$ and $u_i\in \mathfrak{M}(\Omega,\,X)$, we define
    $$q|_s:=\Sigma_ic_iq|_{u_i}.$$
\end{enumerate}
\end{defn}

For example, if
$ q=\lc x  \lc y\star  z\rc_{\omega_1} \rc_{\omega_2}\in\mathfrak{M}^{\star}(\Omega,\,X),
$
then
$
q|_u =  \lc x  \lc yu  z\rc_{\omega_1} \rc_{\omega_2}.
$

\begin{defn}
Let $X$ be a set and $\Omega$ a nonempty set. {\bf A monomial order} on $\mathfrak{M}(\Omega,\,X)$ is a {\bf well order} $\leq$ on $\mathfrak{M}(\Omega,\,X)$ such that 
\begin{equation*}
u < v \Rightarrow q\mid_{u} < q\mid_{v},\, \text{for all}\,\, u,v \in \mathfrak{M}(\Omega,\,X)\,\text{and all}\, q\in\frak{M}^\star(\Omega,\, X).
 \mlabel{eq:order}
\end{equation*}
Here, as usual, we denote $u < v$ if $u\leq v$ but $u\neq v.$%
\end{defn}

\begin{defn}
Let $\leq$ be a monomial order on $\frak{M}(\Omega,\, X)$ and $f, g \in \bfk\frak{M}(\Omega,\, X)$ two distinct monic bracketed polynomials.
\begin{enumerate}
\item
If there exist $u, v, w \in
\mathfrak{M}(\Omega,\,X)$ such that $w=\lbar{f}u=v\lbar{g}$ with $\max \big\{|\lbar{f}|, |\lbar{g}|\big\} < w<|\lbar{f}|+|\lbar{g}|$, we call
$$(f, g)_{w}:=(f, g)_{w}^{u,v}:=fu-vg$$
the {\bf intersection composition of $f$ and $g$ with respect to} $(u,v)$.

\item If there exist $q \in \frak{M}^\star(\Omega,\, X)$ and $w \in \mathfrak{M}(\Omega,\,X)$ such  that
$w=\lbar{f}=q|_{\lbar{g}},$ we call
$$(f, g)_{w}:=(f, g)_{w}^q:=f-q|_{g}$$
the {\bf including composition of $f$ and $g$ with respect to} $q$.
\end{enumerate}
\mlabel{defn:composi}
\end{defn}

The $w$ in Definition~\mref{defn:composi} are called {\bf ambiguities with respect to $f$ and $g$}.
Now we are ready for the concept of Gr\"{o}bner-Shirshov bases.

\begin{defn}
Let $\leq$ be a monomial order on $\mathfrak{M}(\Omega,\,X)$,
 $\bbs\subseteq \bfk\frak{M}(\Omega,\, X)$
 a set of monic bracketed polynomials and $w \in \mathfrak{M}(\Omega,\,X).$
\begin{enumerate}
\item For $u, v\in \bfk\frak{M}(\Omega,\, X),$ we call $u$ and $v$ are {\bf congruent modulo} $(\bbs, w)$ and denote this by
    $$u\equiv v\, \text{ mod }\, (\bbs, w)$$
    if $u-v=\Sigma_{i}c_{i}q_{i}|_{s_{i}},$ where $c_{i}\in \bfk\setminus\{0\}, q_{i}\in \frak{M}^\star(\Omega,\, X), s_{i}\in \bbs$ and $q_{i}|_{\lbar{s_{i}}}<w.$

\item For $f, g \in \bfk\frak{M}(\Omega,\, X)$ and suitable $w$, $u$, $v$ or $q$ that gives an intersection composition $(f, g)_{w}^{u,v}$ or an including composition $(f, g)_{w}^{q}$, the composition is called {\bf trival modulo $(\bbs,w)$}
if
$$(f, g)_{w}^{u,v}\,\text{ or }\, (f, g)_{w}^{q}\equiv 0\,\text{ mod }\,(\bbs, w).$$

\item The set {$\bbs$ is called a {\bf Gr\"{o}bner-Shirshov bases} with respect to $\leq$} if, for all pairs $f, g \in \bbs$, all intersection compositions $(f, g)_{w}^{u,v}$ and all including compositions $(f, g)_{w}^q$ are trivial modulo $(\bbs, w).$
\end{enumerate}
\end{defn}

The following result is the well-known Composition-Diamond lemma for $\Omega$-operated algebras.

\begin{theorem}\cite{BoCh, GSZ}\label{Composition-Diamond lemma}
Let  $\leq$ be a monomial order on $\frak{M}(\Omega,\, X)$ and $\bbs$ a set of monic bracketed polynomials in $\bfk\frak{M}(\Omega,\, X)$.  Then the following statements are equivalent:
 \begin{enumerate}
\item[(I)] $\bbs $ is a Gr\"{o}bner-Shirshov basis in $\bfk\frak{M}(\Omega,\, X)$.

\item[(II)] If $ f\in \Id(\bbs)$, then $\lbar{f}=q|_{\overline{s}}$
for some $q \in \frak{M}^{\star}(\Omega,\, X)$ and $s\in \bbs$.

\item[(II')]  If $f\in \Id(\bbs)$, then $f=\alpha_1q_1|_{s_1}+\cdots+\alpha_nq_n|_{s_n}$,
for some $q_i \in \frak{M}^{\star}(\Omega,\, X)$ and some $s_i\in \bbs$ with
$q_1|_{\overline{s_1}}>\cdots>q_n|_{\overline{s_n}}$.

\item[(III)] $\bfk\mathfrak{M}(\Omega,\,X)=\bfk\operatorname{Irr}(\bbs) \oplus \operatorname{Id}(\bbs)$,  where
$$
\operatorname{Irr}(\bbs)=\mathfrak{M}(\Omega, X) \backslash\left\{\left.q\right|_{\lbar{s}} \mid q \in \mathfrak{M}^{\star}(\Omega, X), s \in \bbs\right\},
$$
and $\operatorname{Irr}(\bbs)$ is a $\bfk$-basis of $\bfk \mathfrak{M}(\Omega, X) / \operatorname{Id}(\bbs)$.
\end{enumerate}
\end{theorem}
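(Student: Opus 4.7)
The plan is to establish the cycle (I)$\Rightarrow$(II)$\Rightarrow$(II')$\Rightarrow$(III)$\Rightarrow$(I), leveraging the well-orderedness of $\leq$ at every step. The heart of the argument is (I)$\Rightarrow$(II); the remaining three implications follow fairly mechanically from it.

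For (I)$\Rightarrow$(II), fix $0\neq f\in\Id(\bbs)$ and write $f=\sum_{i=1}^{n} c_i\, q_i|_{s_i}$ with $c_i\in\bfk\setminus\{0\}$, $q_i\in\frak{M}^\star(\Omega,X)$, $s_i\in\bbs$. Set $w_i:=q_i|_{\lbar{s_i}}$, $w:=\max_i w_i$, and $N_w:=\#\{i\mid w_i=w\}$. Among all such representations of $f$, choose one minimizing the pair $(w,N_w)$ lexicographically. It suffices to exclude $\lbar{f}<w$. If $\lbar{f}<w$, the level-$w$ terms must cancel, so there are $i\neq j$ with $w_i=w_j=w$. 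Inside the common bracketed monomial $w$, the two marked sub-bracketed-word occurrences of $\lbar{s_i}$ (placed by $q_i$) and $\lbar{s_j}$ (placed by $q_j$) fall into three mutually exclusive cases: \emph{disjoint}, \emph{properly intersecting}, or \emph{one including the other}. In the intersecting and including cases, the Gr\"obner-Shirshov hypothesis supplies that the corresponding composition $(s_i,s_j)_{w'}$ is trivial modulo $(\bbs,w')$; wrapping this triviality in the surrounding context of $w$ and substituting back into $f$ strictly decreases $(w,N_w)$. In the disjoint case, isolating both $\star$-positions in a single $p\in\frak{M}(\Omega,X\sqcup\{\star_1,\star_2\})$ and expanding $c_i q_i|_{s_i}+c_j q_j|_{s_j}$ as $(c_i+c_j)\, p|_{s_i,s_j}$ plus a tail whose leading bracketed monomials are strictly below $w$ drops $N_w$ by at least one without invoking the hypothesis. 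Either way, minimality is contradicted, so $\lbar{f}=w$ and (II) holds.

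The implication (II)$\Rightarrow$(II') is an immediate induction: by (II), $\lbar{f}=q_1|_{\lbar{s_1}}$ with leading coefficient $\alpha_1$; replace $f$ by $f-\alpha_1 q_1|_{s_1}\in\Id(\bbs)$ and iterate, with termination guaranteed by well-orderedness. For (II')$\Rightarrow$(III), the sum $\bfk\frak{M}(\Omega,X)=\bfk\Irr(\bbs)+\Id(\bbs)$ follows by successively eliminating any leading monomial lying in $\{q|_{\lbar{s}}\}$ via subtraction of a suitable $q|_{s}$, and directness follows from (II') applied to a putative nonzero $g\in\bfk\Irr(\bbs)\cap\Id(\bbs)$: its leading monomial would simultaneously equal some $q_1|_{\lbar{s_1}}$ and lie in $\Irr(\bbs)$, a contradiction. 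Finally, (III)$\Rightarrow$(I): any composition $(f,g)_w$ lies in $\Id(\bbs)$ with $\lbar{(f,g)_w}<w$; reducing against $\bbs$ using (the already established consequence) (II') yields an expansion $(f,g)_w=\sum_i \alpha_i q_i|_{s_i}$ whose top term already satisfies $q_1|_{\lbar{s_1}}=\lbar{(f,g)_w}<w$, so every $q_i|_{\lbar{s_i}}<w$, which is precisely triviality modulo $(\bbs,w)$.

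The main obstacle is the case analysis at the heart of (I)$\Rightarrow$(II): classifying how two sub-bracketed-word occurrences of $\lbar{s_i}$ and $\lbar{s_j}$ can co-exist inside a common bracketed monomial $w$. Unlike the classical associative setting, the family of operators $\{\lc\cdot\rc_\omega\mid\omega\in\Omega\}$ permits nested occurrences across different operator layers, and one must verify that no fourth pathological configuration is hiding in the operator structure, while also checking in the disjoint case that the common-context monomial $p\in\frak{M}(\Omega,X\sqcup\{\star_1,\star_2\})$ really can be formed (the two $\star$-slots must genuinely be independent inside $w$). Once this combinatorial lemma is established, the rest of the implications are essentially bookkeeping against the well-order.
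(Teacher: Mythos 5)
The paper does not prove this theorem at all: it is quoted from \cite{BoCh, GSZ}, so there is no in-paper argument to compare against. Your sketch follows the standard proof from those references: the minimal-counterexample argument on the pair $(w,N_w)$ for (I)$\Rightarrow$(II), the trichotomy of occurrences (disjoint / properly intersecting / one including the other), and routine reductions for the remaining implications. That outline is sound, and you correctly identify where the real work lies, namely the combinatorial classification of how two occurrences of $\lbar{s_i}$ and $\lbar{s_j}$ can coexist inside $w$ in the presence of the operators $\lc\,\rc_\omega$; you assert rather than prove that trichotomy, but you flag the omission honestly, and it does hold (an occurrence is either nested inside a bracket of the other, or they sit at the same multiplicative level, where the breadth condition in the definition of intersection composition separates proper overlap from disjointness).

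There is, however, one genuine logical flaw: in (III)$\Rightarrow$(I) you reduce against $\bbs$ ``using (the already established consequence) (II')''. In your cyclic scheme (I)$\Rightarrow$(II)$\Rightarrow$(II')$\Rightarrow$(III)$\Rightarrow$(I), statement (II') is only known to hold under hypothesis (I), which is precisely what you are trying to prove at that point, so invoking it is circular. The repair is standard and needs only (III): run the division algorithm on $h:=(f,g)_w$ (if $\lbar{h}\in\Irr(\bbs)$ move the leading term into the remainder, otherwise $\lbar{h}=q|_{\lbar{s}}$ and subtract the corresponding scalar multiple of $q|_{s}$; termination by well-orderedness). This yields $(f,g)_w=\sum_i\alpha_i q_i|_{s_i}+r$ with $r\in\bfk\Irr(\bbs)$ and every $q_i|_{\lbar{s_i}}\leq\overline{(f,g)_w}<w$; since $r=(f,g)_w-\sum_i\alpha_i q_i|_{s_i}\in\Id(\bbs)\cap\bfk\Irr(\bbs)$, the directness of the sum in (III) forces $r=0$, which is exactly triviality modulo $(\bbs,w)$. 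A smaller cosmetic point in the disjoint case of (I)$\Rightarrow$(II): $(c_i+c_j)\,p|_{s_i,s_j}$ is a two-fold substitution and is not yet a legal term of a representation; it must be re-expanded as $(c_i+c_j)\,q_i|_{s_i}$ plus terms whose leading monomials are strictly below $w$ (using monotonicity of the order under contexts) before one can conclude that $N_w$ drops. This is harmless but should be said explicitly.
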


\subsection{Gr\"{o}bner-Shirshov bases for $\Omega$-Rota-Baxter systems}
In this subsection, we first construct a required monomial order on $\frakM(\Omega, X)$. Then by the Composition-Diamond lemma for $\Omega$-operated algebras, we obtain a linear basis of the free \orbsa.

\smallskip

{\bf Notice.} Let $\Omega_R$ and $\Omega_S$ be two disjoint copies of $\Omega$.
{\em In the rest of this paper}, in order to distinguish the linear operators appearing in $\rswx$, we denote
\begin{align*}
\obr{\,}: \bfk \rswx \to \bfk\rswx, w\mapsto \lc w\rc_{\omega}, \,\text{ for }\,  \omega\in \Omega_R,\\
\obs{\,}: \bfk \rswx \to \bfk\rswx, w\mapsto \lc w\rc_{\omega},  \,\text{ for }\,\omega\in \Omega_S.
\end{align*}
Write $\obq{\,}$ to be $\obr{\,}$ or $\obs{\,}$.

Let $(X, \leq_X)$ and $(\big\{\obr{\,}, \obs{\,} \mid \omega \in \Omega\big\}, \leq_\Omega)$ be two well-ordered sets.
We now extend $\leq_X$ and $\leq_\Omega$ to a monomial order $\leq_\db$ on $\rswx$.
Let $u\in \rswx$. Define ${\rm deg}(u)$ to be the number of all occurrences of all $x\in X$ and $\obq{\,} \in \big\{\obr{\,}, \obs{\,} \mid \omega \in \Omega\big\}$, counting multiplicity. Writting $u=u_1\cdots u_n\in \rswx$ with $n\geq 1$ and each $u_i$ prime, denote by
\begin{equation*}
\st(u):=(u_1,\cdots, u_n)\,\text{ and }\, {\rm wt}(u) :=(\mbox{deg}(u), |u|, u_1,\cdots, u_n). \mlabel{eq:dborder}
\end{equation*}

For $u,v\in \rswx$, define $u\leq_\db v$ inductively on $\dep(u)+\dep(v)\geq 0$. For the initial step of $\dep(u)+\dep(v)=0$, we have $u, v\in M(X)$ and define $u\leq_{\db} v$ by the degree lexicographical order, that is,
$$
u \leq_{\db} v \, \text{ if }\, {\rm wt}(u)\leq {\rm wt}(v)\quad  \mbox{lexicographically}.
$$
Here notice that $\deg(u)=|u|$ and $\deg(v)=|v|$.
For the induction step, we first assume $|u|=|v|=1$.
If $u= \lc \tilde{u}\rc_\alpha^{\ast_1}$ and $v= \lc \tilde{v}\rc_\beta^{\ast_2}$ for some $ \lc \,\rc_\alpha^{\ast_1}, \lc \,\rc_\beta^{\ast_2}\in \big\{\obr{\,}, \obs{\,} \mid \omega \in \Omega\big\}$ and some
$\tilde{u}, \tilde{v}\in \rswx$, then define
\begin{equation}
u\leq_{\db} v \,\text{ if }\, (\lc \,\rc_\alpha^{\ast_1}, \tilde{u} )\leq (\lc \,\rc_\beta^{\ast_2}, \tilde{v})\quad  \mbox{lexicographically}.  \mlabel{eq:induction}
\end{equation}
Here we use $\leq_\Omega$ for the first component and induction hypothesis for the second component.
If $u\in X$ and $v= \obqb{\tilde{v}}$ for some $\beta\in \Omega$ and $\tilde{v}\in \rswx$, then define $u<_\db v$.
Next, for general $u,v\in \rswx$, we define
$u\leq_\db v$ by
\begin{equation}
u\leq_\db v\Leftrightarrow
\left\{\begin{array}{ll}
\deg(u)<\deg(v), \\
\text{ or } \deg(u) = \deg(v)\, \text{ and }\, |u|<|v|,\\
\text{ or } \deg(u) = \deg(v),\, |u|= |v| \, \text{ and }\st(u) \leq \st(v) \quad \text{ lexicographically.}
\end{array}
\right.
\mlabel{eq:db0}
\end{equation}
Namely, we define
\begin{equation}
u\leq_\db v \,\text{ if }\, {\rm wt}(u) \leq {\rm wt}(v)\quad  \text{lexicographically. }
\mlabel{eq:db}
\end{equation}

We expose the following useful facts.

\begin{lemma}
\begin{enumerate}
\item\cite{GSZ} Let $A$ and B be two well-ordered sets. Then we obtain an extended well order on the disjoint union $A \sqcup B$ by defining $a<b$ for all $a \in A$ and $b \in B$. \mlabel{GSZ}
\item\cite{Har} Let $\leq_{Y_{i}}$ be a well order on $Y_{i}, 1 \leq i \leq k, k \geq 1$. Then the lexicographical product order is a well order on the cartesian product $Y_{1} \times \cdots \times Y_{k}$.  \mlabel{Har}
\end{enumerate}
\mlabel{lemma:order}
\end{lemma}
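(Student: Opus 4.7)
The plan is to verify these two classical facts about well-orders in the standard way. Both results are cited (to \cite{GSZ} and \cite{Har}), so the author is presumably importing them without proof, but the arguments are straightforward case splits plus induction.

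For part (a), I would first check that the relation on $A \sqcup B$ defined by extending $\leq_A$ and $\leq_B$ with $a < b$ for all $a \in A, b \in B$ is a total order: reflexivity and transitivity are immediate from the corresponding properties of $\leq_A$ and $\leq_B$, together with the new cross-comparisons; antisymmetry holds because elements from distinct components are comparable in exactly one direction. To verify well-foundedness, pick an arbitrary non-empty subset $S \subseteq A \sqcup B$. If $S \cap A \neq \emptyset$, then the minimum $m$ of $S \cap A$ (which exists since $\leq_A$ is a well order) is also the minimum of $S$, because every element of $S \cap B$ strictly exceeds every element of $A$ by construction. Otherwise $S \subseteq B$, and the minimum of $S$ with respect to $\leq_B$ serves.

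For part (b), I would proceed by induction on $k$. The base case $k=1$ is trivial. For the inductive step, view $Y_1 \times \cdots \times Y_k$ as $Y_1 \times (Y_2 \times \cdots \times Y_k)$. Totality of the lexicographic order on the product follows mechanically from totality of each $\leq_{Y_i}$. For well-foundedness, given a non-empty subset $S$, consider the projection $\pi_1(S) \subseteq Y_1$; by well-foundedness of $\leq_{Y_1}$ it has a minimum $m_1$. The fiber $S_{m_1} := \{(y_2,\ldots,y_k) \mid (m_1,y_2,\ldots,y_k) \in S\}$ is a non-empty subset of $Y_2 \times \cdots \times Y_k$, which by the inductive hypothesis has a lexicographic minimum $(m_2, \ldots, m_k)$. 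Then $(m_1, m_2, \ldots, m_k)$ is the minimum of $S$ in the lexicographic product order on $Y_1 \times \cdots \times Y_k$.

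There is no genuine obstacle here; both arguments are routine reductions to the given well-orderings on the component sets, and no new structural insight is needed. The only point requiring mild care is to keep the case split in part (a) clean and to match up the inductive definitions of the lexicographic order correctly in part (b), so that the fiber $S_{m_1}$ is truly non-empty when $m_1$ is realized in $\pi_1(S)$.
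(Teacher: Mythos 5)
Your proof is correct. The paper itself gives no argument for this lemma --- both parts are imported by citation (to \cite{GSZ} and \cite{Har}) --- and your case split for the disjoint union and your induction on $k$ with the projection-then-fiber argument for the lexicographic product are exactly the standard proofs one would find in those references.
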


Now we are ready to prove that the order $\leq_\db$ is a monomial order.

\begin{prop}
Let $(X, \leq_X)$ and $(\big\{\obr{\,}, \obs{\,} \mid \omega \in \Omega\big\}, \leq_\Omega)$ be two well-ordered sets.
The order $\leq_\db$ defined above is a monomial order on $\rswx$.
\mlabel{pp:mord}
\end{prop}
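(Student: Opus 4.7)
To establish that $\leq_\db$ is a monomial order, two things must be verified: first, that $\leq_\db$ is a well order on $\rswx$; second, that $u<_\db v$ implies $q|_u<_\db q|_v$ for every $q\in\rswxs$. Before either step, one checks that the recursive definition in Eqs.~(\mref{eq:induction})--(\mref{eq:db}) is unambiguous, which is routine since the recursion in Eq.~(\mref{eq:induction}) strictly decreases $\dep(\tu)+\dep(\tv)$ and Eq.~(\mref{eq:db}) then lexicographically combines quantities already defined.

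The plan for the well-order property is a nested induction on the degree $d$, simultaneously producing well orders on the set $W_d$ of bracketed words of degree at most $d$ and on the set $P_d$ of prime words of degree at most $d$. In the inductive step, $P_d = X \sqcup \bigsqcup_{\omega\in\Omega_R\sqcup\Omega_S}\lc W_{d-1}\rc_\omega$; iterated application of Lemma~\mref{lemma:order}(a), using the well-ordered index set $(\{\obr{\,},\obs{\,}\},\leq_\Omega)$ together with the well-orderedness of $W_{d-1}$ from the induction hypothesis, equips $P_d$ with a well order, which matches the restriction of $\leq_\db$ to primes via Eq.~(\mref{eq:induction}). The slice of $W_d$ of fixed degree $d'\le d$ and breadth $n$ embeds via $\st$ into the $n$-fold lexicographic product $P_{d'}^n$, which is well-ordered by Lemma~\mref{lemma:order}(b); stacking these slices according to the well order on $\mathbb{N}^2$ (first by $d'$, then by $n$) yields $\leq_\db$ as a well order on $W_d$, and hence on $\rswx$.

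For substitution compatibility the plan is to induct on $\dep(q)$. Write $q = q_1\cdots q_n$ with each $q_i$ prime, and let $q_k$ be the unique prime containing $\star$. If $q_k = \star$, then $q|_u = q_1\cdots q_{k-1}\,u\,q_{k+1}\cdots q_n$, and the identities $\dg(q|_u) = \dg(q) - 1 + \dg(u)$ and $|q|_u| = n-1+|u|$ (together with their analogues for $v$) convert each of the three clauses of Eq.~(\mref{eq:db0}) witnessing $u<_\db v$ into the same clause witnessing $q|_u<_\db q|_v$. In the third clause, this uses that $(q_1,\ldots,q_{k-1})$ is a common prefix and $(q_{k+1},\ldots,q_n)$ a common suffix of $\st(q|_u)$ and $\st(q|_v)$, so the first disagreement occurs inside the $\st(u)$-versus-$\st(v)$ block. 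If instead $q_k = \lc q'\rc_\alpha^{\ast}$ for some bracketed prime containing $\star$, then $|q|_u|=|q|_v|=n$ and the only $\st$-position where $q|_u$ and $q|_v$ can differ is the $k$-th; by Eq.~(\mref{eq:induction}) the comparison of $\lc q'|_u\rc_\alpha^\ast$ and $\lc q'|_v\rc_\alpha^\ast$ reduces to the comparison of $q'|_u$ and $q'|_v$, which the induction hypothesis supplies since $\dep(q')<\dep(q)$.

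The principal technical obstacle is the bookkeeping in the first substitution subcase, where $|u|\neq|v|$ forces $q|_u$ and $q|_v$ to have different breadths and $\st$-tuples of different lengths; one must verify carefully that the lexicographic comparison on the full $\st$-vectors is governed precisely by the corresponding clause of Eq.~(\mref{eq:db0}) applied to $u$ and $v$, exploiting the common prefix/suffix structure described above. Once this bookkeeping is in place, together with the depth-based recursion through brackets, both conditions for $\leq_\db$ to be a monomial order are secured.
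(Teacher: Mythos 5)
Your proposal is correct and takes essentially the same approach as the paper's proof: the well order is produced by stratifying bracketed words by degree and breadth and applying Lemma~\mref{lemma:order}, and the compatibility of $\leq_\db$ with substitution comes down to the same three-case analysis of Eq.~(\mref{eq:db0}) together with the reduction through brackets via Eq.~(\mref{eq:induction}). The only organizational difference is that you establish $u<_\db v\Rightarrow q|_u<_\db q|_v$ by a single induction on $\dep(q)$, whereas the paper checks compatibility with each operator $\lc\,\rc_\omega^\ast$ and with concatenation separately and leaves implicit the standard reduction of a general $q$ to these elementary operations.
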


\begin{proof}
We first prove that $\leq_\db$ is a well order on $\rswx$.
The restriction of $\leq_\db$ on $M(X)$ is the degree lexicographical order, which is a well order~\mcite{BN}.
The restriction of $\leq_\db$ on
$$\big\{ \obq{\rswx} \mid \omega\in \Omega \big\} = \big\{ \lc \rswx \rc_\omega^R, \lc \rswx \rc_\omega^S \mid \omega\in \Omega \big\}$$ is a well order by Eq.~(\mref{eq:induction}), Lemma~\mref{lemma:order}-\mref{Har} and induction on the sum of depth. By Lemma~\mref{lemma:order}-\mref{GSZ}, the restriction of $\leq_\db$ on the set of prime elements
$X\sqcup \big\{ \obq{\rswx} \mid \omega\in \Omega \big\}$ is a well order.
Finally, since $\deg(u), |u|\in \mathbb{Z}_{\geq 0}$, the order $\leq_\db$ is a well order on $\rswx$ by Eq.~(\mref{eq:db}) and Lemma~\mref{lemma:order}-\mref{Har}.

We are left to verify that the $\leq_\db$ are compatible with the linear operators $\lc\,\rc^\ast_\omega\in\{\lc\,\rc^R_\omega, \lc\,\rc^S_\omega\mid\omega\in\Omega\}$ and the concatenation product. The former follows from Eq.~\eqref{eq:induction} by taking $\lc \,\rc_\alpha^{\ast_1}:= \lc \,\rc_\beta^{\ast_2}:= \lc \,\rc_\omega^{\ast}$.
For the later, it suffices to prove the implication
$$
u \leq_\db v \Longrightarrow w u \leq_\db w v\,\text{ and }\,  uw \leq_\db vw \text{ for }\, w \in \rswx .
$$
By symmetry, we only prove the case of $w u \leq_\db w v$ provided $u \leq_\db v$.
There are three cases to consider according to Eq.~(\mref{eq:db0}).

\noindent {\bf Case 1.} $\deg(u)<\deg(v)$. Then
\begin{align*}
\deg(w u)=\deg(u)+\deg(w)<\deg(v)+\deg(w)=\deg(w v),
\end{align*}
and so $w u<_\db w v$ by Eq.~\eqref{eq:db}.

\noindent{\bf Case 2.} $\deg(u)=\deg(v)$ and $|u|<|v|$. In this case, we have
$\deg(w u)=\deg(w v)$ and
 \[|w u|=|w|+|u|<|w|+|v|=|w v|,\]
which implies $w u<_\db w v$.

\noindent{\bf Case 3.} $\deg(u)=\deg(v),\,|u|=|v|$ and $\st(u) \leq \st(v)$ lexicographically. Then
$\deg(w u)=\deg(w v)\,\text{ and }\, |w u|=|w v|.$
Write
$$u=u_1\cdots u_m,\, v=v_1\cdots v_m\,\text{ and }\, w=w_1\cdots w_n,$$
where all $u_i$, $v_j$ and $w_k$ are prime. Since
$$\st(u)=(u_1,\cdots, u_m) \leq \st(v)=(v_1,\cdots, v_m)\quad\text{lexicographically}, $$
we have
$$\st(wu)=(w_1,\cdots ,w_n, u_1,\cdots, u_m) \leq \st(wv)=(w_1,\cdots ,w_n,v_1,\cdots, v_m)\quad\text{lexicographically }.$$
Thus we have $w u \leq_\db w v$. This completes the proof.
\end{proof}

Now we arrive at our first main result of this paper.

\begin{theorem}
Let $X$ be a set and $\Omega$ a set with four binary operations $\leftarrow,\rightarrow,\lhd$ and $\rhd$.
Let $\leq_\db$ be the monomial order on $\rswx$ defined as above.
\begin{enumerate}
\item The set
\begin{equation*}
\mathbb{S}_{\Omega\, S}:=\left\{
\left.
 \begin{array}{ll}
\rbsla{\alpha}{\beta}{u}{v}-\rbsray{\alpha}{\beta}{u}{v}-\rbsraz{\alpha}{\beta}{u}{v}\\
\rbslb{\alpha}{\beta}{u}{v}-\rbsrby{\alpha}{\beta}{u}{v}-\rbsrbz{\alpha}{\beta}{u}{v} \\
\end{array}
\right|u, v\in \rswx\,\text{ and }\, \alpha, \beta\in\Omega \right\}
\end{equation*}
is a Gr\"{o}bner-Shirshov basis in $\bfk\rswx$ if and only if
$(\Omega, \leftarrow,\rightarrow,\lhd,\rhd)$ is an extended diassociative semigroup. \mlabel{it:orbsaa}

\item If $(\Omega, \leftarrow,\rightarrow,\lhd,\rhd)$ is an extended diassociative semigroup, then the set
$$\Irr(\mathbb{S}_{\Omega\, S}) := \{ w\in \rswx \mid  w \neq
q|_{\overline{s}}\,  \text{ for  any }\, q \in \rswxs \,  \text{ and any }\, s\in \mathbb{S}_{\Omega\, S}\}$$
is a $\bfk$-basis of the free \orbsa $\bfk\rswx/\Id(\mathbb{\mathbb{S}}_{\Omega\, S})$. \mlabel{it:orbsab}
\end{enumerate}
\mlabel{thm:gsb}
\end{theorem}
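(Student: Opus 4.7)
The plan is to invoke Theorem~\mref{Composition-Diamond lemma}. Under $\leq_\db$, a breadth-two monomial dominates every monomial of smaller breadth of the same total degree, so the leading monomials of the two families of generators of $\mathbb{S}_{\Omega S}$ are $\lc u\rc^R_\alpha\lc v\rc^R_\beta$ and $\lc u\rc^S_\alpha\lc v\rc^S_\beta$ respectively; every other summand in each defining element is of breadth one. Part~\mref{it:orbsab} will follow from part~\mref{it:orbsaa} via Theorem~\mref{Composition-Diamond lemma}(III), since the quotient $\bfk\rswx/\Id(\mathbb{S}_{\Omega S})$ inherits an \orbsa structure from the operators $\lc\,\rc^R_\omega$ and $\lc\,\rc^S_\omega$ and is, by construction of $\mathbb{S}_{\Omega S}$ as a complete list of instances of the identities in Definition~\mref{defn:omegarbsystem}, universal among \orbsas on $X$. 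So the work is in part~\mref{it:orbsaa}.

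For part~\mref{it:orbsaa} I must check that every ambiguity produces a composition trivial modulo $(\mathbb{S}_{\Omega S},w)$. An including composition arises when some $\bar h$ for $h\in\mathbb{S}_{\Omega S}$ sits inside an argument $u$ or $v$ of the leading term of another generator; because $u,v$ are free parameters and the outer brackets are $\bfk$-linear, the corresponding difference $f-q|_h$ collapses routinely into a $\bfk$-combination of terms $r|_{s}$ with $s\in\mathbb{S}_{\Omega S}$ and $r|_{\overline{s}}<_\db w$. The real content lies with intersection compositions. Since both leading-term families have breadth two and an $R$-bracket cannot coincide with an $S$-bracket as a prime, the only shared middle factors are of the same type, producing exactly two families of ambiguity:
\[
w_1=\lc u_1\rc^R_\alpha\lc u_2\rc^R_\beta\lc u_3\rc^R_\gamma,\qquad w_2=\lc u_1\rc^S_\alpha\lc u_2\rc^S_\beta\lc u_3\rc^S_\gamma,
\]
parametrised by $\alpha,\beta,\gamma\in\Omega$ and $u_1,u_2,u_3\in\rswx$.

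For each $w_i$ I unfold the composition $f\cdot\lc u_3\rc^{\ast}_\gamma-\lc u_1\rc^{\ast}_\alpha\cdot g$ to obtain a four-term expression, each summand of which still contains a same-type two-bracket product. Iterating the defining relations on these inner products until no same-type adjacency remains writes the composition as a $\mathbb{Z}$-linear combination of bracketed words of a handful of \emph{irreducible shapes}: an outermost bracket wrapping a triply-nested arrangement of $u_1,u_2,u_3$ and inner $R$-/$S$-brackets with no $R\cdot R$ or $S\cdot S$ pair. In each such shape the bracket subscripts are words in $\to,\leftarrow,\rhd,\lhd$ applied to $\alpha,\beta,\gamma$, and triviality modulo $(\mathbb{S}_{\Omega S},w_i)$ amounts to demanding that the coefficient of each irreducible shape vanish formally in $\Omega$. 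The coefficient identities forced by $w_1$ will be precisely nine of the relations in Definition~\mref{defn:diassociative semigroups} and those forced by $w_2$ the remaining six, yielding the announced biconditional; conversely the fifteen axioms make every coefficient vanish by direct substitution.

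The principal obstacle is the bookkeeping. Expanding the $w_1$-composition once produces four summands, each of which hides a same-type bracket product needing a second (and in places a third) rewrite, so the final irreducible expansion has on the order of a dozen summands whose subscripts must be matched pattern-by-pattern against the candidate axioms. The matching stays tractable because on three primes the irreducible shapes form a small finite set of rooted-tree skeletons: within each skeleton the coefficients regroup into short $\Omega$-polynomial expressions, each of which vanishes precisely when one of the fifteen extended-diassociative-semigroup axioms holds. The calculation parallels the weight-zero $\Omega$-Rota-Baxter-algebra analysis in~\mcite{FP} and essentially doubles it across the $R$- and $S$-families.
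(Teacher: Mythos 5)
Your overall strategy --- identify the leading monomials under $\leq_\db$, observe that the only intersection ambiguities are the two breadth-three overlaps $w_1=\lc u_1\rc^R_\alpha\lc u_2\rc^R_\beta\lc u_3\rc^R_\gamma$ and $w_2=\lc u_1\rc^S_\alpha\lc u_2\rc^S_\beta\lc u_3\rc^S_\gamma$, argue that the including compositions are trivial with no hypothesis on $\Omega$, and read the extended-diassociative-semigroup axioms off the intersection compositions, with part (b) then following from the Composition--Diamond lemma --- is exactly the route the paper takes. Your claim about the including compositions is correct in conclusion, though it is asserted rather than verified; the paper checks it by a uniform computation (parametrised by $Q,T\in\{R,S\}$) showing that these compositions reduce to exactly $0$, which is what keeps the biconditional clean.

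However, your central computational assertion is false: it is not the case that $w_1$ forces nine of the fifteen axioms and $w_2$ the remaining six. The point you are missing is that the $R$-relation is not closed under $R$-brackets: its right-hand side $\lc\lc u\rc^R_{\alpha\rhd\beta}v\rc^R_{\alpha\rightarrow\beta}+\lc u\lc v\rc^S_{\alpha\lhd\beta}\rc^R_{\alpha\leftarrow\beta}$ already contains an $S$-bracket. Consequently, when you iterate the reduction of the $w_1$-composition, the summand $\lc u\lc v\rc^S_{\alpha\lhd\beta}\rc^R_{\alpha\leftarrow\beta}\lc w\rc^R_\gamma$ produces, after one more $R$-rewrite, the inner product $\lc v\rc^S_{\alpha\lhd\beta}\lc w\rc^S_{(\alpha\leftarrow\beta)\lhd\gamma}$, which must then be reduced by the $S$-relation; matching the resulting irreducible shapes therefore yields constraints involving all four operations at once. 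The paper's explicit expansion shows that the single ambiguity $w_1$ is trivial modulo $(\mathbb{S}_{\Omega\,S},w_1)$ if and only if \emph{all fifteen} axioms hold, and $w_2$ then yields the identical fifteen conditions by the symmetry exchanging $\lc\,\rc^R_\omega$ and $\lc\,\rc^S_\omega$, so only one of the two need be computed. Your $9/6$ split would still deliver the stated biconditional if it were true, but as written it is false; and since the entire content of part (a) is precisely this bookkeeping, the proposal cannot stand until the expansion is actually carried out and the axiom count corrected.
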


\begin{proof}
\mref{it:orbsaa}
 For $u,v\in \rswx$ and $\alpha,\beta\in\Omega$, write
$$
f_{\alpha,\,\beta}(u,v):= \rbsla{\alpha}{\beta}{u}{v}-\rbsray{\alpha}{\beta}{u}{v}-\rbsraz{\alpha}{\beta}{u}{v},
$$
$$
g_{\alpha,\,\beta}(u, v):= \rbslb{\alpha}{\beta}{u}{v} -\rbsrby{\alpha}{\beta}{u}{v}-\rbsrbz{\alpha}{\beta}{u}{v}.
$$
With respect to $\leq_\db$, the leading monomials of $f_{\alpha,\,\beta}(u,v)$ and $g_{\alpha,\,\beta}(u, v)$ are
$\rbsla{\alpha}{\beta}{u}{v}$ and $\rbslb{\alpha}{\beta}{u}{v}$, respectively.
All possible compositions are listed as below:
\begin{eqnarray*}
\text{intersection compositions},&\ \ &\text{ambiguities}\\
(f_{\alpha,\,\beta}(u,v), f_{\beta,\,\gamma}(v, w))_{w_1},&  \ \  &w_1=\lc u \rc ^R_\alpha \lc v\rc ^R_\beta \lc w\rc ^R_\gamma,\nonumber\\
(g_{\alpha,\,\beta}(u,v), g_{\beta,\,\gamma}(v, w))_{w_2},&  \ \  &w_2=\lc u \rc ^S_\alpha \lc v\rc ^S_\beta \lc w\rc ^S_\gamma,\nonumber\\ \\
\text{including compositions},&\ \ &\text{ambiguities}\\
(f_{\gamma,\,\delta}(q|_{{\rbsla{\alpha}{\beta}{u}{v}}}, w), f_{\alpha,\,\beta}(u, v))_{w_3},&\ \  &w_3=\rbsla{\gamma}{\delta}{q|_{{\rbsla{\alpha}{\beta}{u}{v}}}}{w},\nonumber\\
(f_{\alpha,\,\delta}(u, q|_{{\rbsla{\beta}{\gamma}{v}{w}}}), f_{\beta,\,\gamma}(v, w))_{w_4},&\ \  &w_4=\rbsla{\alpha}{\delta}{u}{q|_{{\rbsla{\beta}{\gamma}{v}{w}}}},\nonumber\\
(f_{\gamma,\,\delta}(q|_{{{\rbslb{\alpha}{\beta}{u}{v}}}}, w), g_{\alpha,\,\beta}(u, v))_{w_5},&\ \  &w_5=\rbsla{\gamma}{\delta}{q|_{{\rbslb{\alpha}{\beta}{u}{v}}}}{w},\nonumber\\
(f_{\alpha,\,\delta}(u, q|_{{\rbslb{\beta}{\gamma}{v}{w}}}), g_{\beta,\,\gamma}(v, w))_{w_6},&\ \  &w_6=\rbsla{\alpha}{\delta}{u}{q|_{{\rbslb{\beta}{\gamma}{v}{w}}}},\nonumber\\
(g_{\gamma,\,\delta}(q|_{{{\rbsla{\alpha}{\beta}{u}{v}}}}, w), f_{\alpha,\,\beta}(u, v))_{w_7},&\ \  &w_7=\rbslb{\gamma}{\delta}{q|_{{\rbsla{\alpha}{\beta}{u}{v}}}}{w},\nonumber\\
(g_{\alpha,\,\delta}(u, q|_{{\rbsla{\beta}{\gamma}{v}{w}}}), f_{\beta,\,\gamma}(v, w))_{w_{8}},&\ \  &w_{8}=\rbslb{\alpha}{\delta}{u}{q|_{{\rbsla{\beta}{\gamma}{v}{w}}}},\nonumber\\
(g_{\gamma,\,\delta}(q|_{{\rbslb{\alpha}{\beta}{u}{v}}}, w), g_{\alpha,\,\beta}(u, v))_{w_9},&\ \  &w_9=\rbslb{\gamma}{\delta}{q|_{{\rbslb{\alpha}{\beta}{u}{v}}}}{w},\nonumber\\
(g_{\alpha,\,\delta}(u, q|_{{\rbslb{\beta}{\gamma}{v}{w}}}), g_{\beta,\,\gamma}(v, w))_{w_{10}},&\ \  &w_{10}=\rbslb{\alpha}{\delta}{u}{q|_{{\rbslb{\beta}{\gamma}{v}{w}}}}.\nonumber
\label{composition}
\end{eqnarray*}

Among these ambiguities, there are five pairs $(w_1, w_2)$, $(w_3, w_4)$, $(w_5, w_6)$, $(w_7, w_8)$ and $(w_9, w_{10})$.
The pair $(w_1, w_2)$ is symmetric by exchanging $\lc \, \rc ^R_\omega$ and $\lc \, \rc ^S_\omega$ for each $\omega\in \Omega$. The pairs $(w_3, w_4)$, $(w_5, w_6)$, $(w_7, w_8)$ and $(w_9, w_{10})$ are symmetric
in the sense that the ambiguity of one composition in a pair can be obtained
from the ambiguity of the other composition by taking the opposite multiplication. Hence for each
pair, it suffices to show the triviality of the composition from the first ambiguity.
Compositions from $w_1$ and $w_2$ are trivial if and only if $\Omega$ is an extended diassociative semigroup,
and others are trivial automatically.

Indeed, for the first one, we have
\allowdisplaybreaks{
\begin{eqnarray*}
&&\Big(f_{\alpha,\,\beta}(u,v), f_{\beta,\,\gamma}(v, w)\Big)_{w_1}\\
&=&f_{\alpha,\,\beta}(u,v)\lc w \rc^R_{\gamma}-\lc u \rc ^R_{\alpha}f_{\beta,\,\gamma}(v, w)\\
&=&
\left(\rbsla{\alpha}{\beta}{u}{v}-\rbsray{\alpha}{\beta}{u}{v}-\rbsraz{\alpha}{\beta}{u}{v}\right)\lc w \rc^R_{\gamma}\\
&& -\lc u \rc ^R_{\alpha}\left(\rbsla{\beta}{\gamma}{v}{w}-\rbsray{\beta}{\gamma}{v}{w}-\rbsraz{\beta}{\gamma}{v}{w}\right)\\
&=& -\rbsray{\alpha}{\beta}{u}{v}\lc w \rc^R_{\gamma} - \rbsraz{\alpha}{\beta}{u}{v}\lc w \rc^R_{\gamma}+  \lc u \rc ^R_{\alpha}\rbsray{\beta}{\gamma}{v}{w} +  \lc u \rc ^R_{\alpha}\rbsraz{\beta}{\gamma}{v}{w}\\
&\equiv&
- \rbsray{(\alpha\rightarrow\beta)}{\gamma}{\lc u \rc ^R_{\alpha\rhd\beta} v}{w}-\rbsraz{(\alpha\rightarrow\beta)}{\gamma}{\lc u \rc ^R_{\alpha\rhd\beta} v}{w}\\
&&-\rbsray{(\alpha\leftarrow\beta)}{\gamma}{u\lc v \rc ^S_{(\alpha\lhd\beta)}}{w}-\rbsraz{(\alpha\leftarrow\beta)}{\gamma}{u\lc v \rc ^S_{(\alpha\lhd\beta)}}{w}\\
&&+\rbsray{\alpha}{(\beta\rightarrow\gamma)}{u}{\lc v \rc^R_{\beta\rhd\gamma}w}+\rbsraz{\alpha}{(\beta\rightarrow\gamma)}{u}{\lc v \rc^R_{\beta\rhd\gamma}w}\\
&&+\rbsray{\alpha}{(\beta\leftarrow\gamma)}{u}{v \lc w \rc^S_{\beta\lhd\gamma}}+\rbsraz{\alpha}{(\beta\leftarrow\gamma)}{u}{v \lc w \rc^S_{\beta\lhd\gamma}}\\
&\equiv&
-\rbsray{(\alpha\rightarrow\beta)}{\gamma}{\lc u \rc ^R_{\alpha\rhd\beta} v}{w}-\rbsraz{(\alpha\rightarrow\beta)}{\gamma}{\lc u \rc ^R_{\alpha\rhd\beta} v}{w}-\rbsray{(\alpha\leftarrow\beta)}{\gamma}{u\lc v \rc ^S_{(\alpha\lhd\beta)}}{w}\\
&&- \lc u \rbsrby{(\alpha\lhd\beta)}{((\alpha\leftarrow\beta)\lhd\gamma)}{v}{w}\rc ^R_{(\alpha\leftarrow\beta)\leftarrow\gamma}
-\lc u\rbsrbz{(\alpha\lhd\beta)}{((\alpha\leftarrow\beta)\lhd\gamma)}{v}{w} \rc^R_{(\alpha\leftarrow\beta)\leftarrow\gamma}\\
&&+\lc \rbsray{(\alpha\rhd(\beta\rightarrow\gamma))}{(\beta\rhd\gamma)}{u}{v} w \rc ^R_{\alpha\rightarrow(\beta\rightarrow\gamma)}
+\lc \rbsraz{(\alpha\rhd(\beta\rightarrow\gamma))}{(\beta\rhd\gamma)}{u}{v} w\rc^ R_{\alpha\rightarrow(\beta\rightarrow\gamma)}\\
&&+\rbsraz{\alpha}{(\beta\rightarrow\gamma)}{u}{\lc v \rc^R_{(\beta\rhd\gamma)}w}+\rbsray{\alpha}{(\beta\leftarrow\gamma)}{u}{v \lc w \rc^S_{\beta\lhd\gamma}}+\rbsraz{\alpha}{(\beta\leftarrow\gamma)}{u}{v \lc w \rc^S_{\beta\lhd\gamma}},
\end{eqnarray*}
}
which is trivial mod $(\mathbb{S}_{\Omega\, S}, w_1)$ if and only if
\allowdisplaybreaks{
\begin{eqnarray*}
(\alpha\rightarrow\beta)\rightarrow\gamma&=&\alpha\rightarrow(\beta\rightarrow\gamma),\\
(\alpha\rightarrow\beta)\rhd\gamma&=&(\alpha\rhd(\beta\rightarrow\gamma))\rightarrow(\beta\rhd\gamma),\\
\alpha\rhd\beta&=&(\alpha\rhd(\beta\rightarrow\gamma))\rhd(\beta\rhd\gamma),\\
(\alpha\rightarrow\beta)\leftarrow\gamma&=&\alpha\rightarrow(\beta\leftarrow\gamma),\\
\alpha\rhd(\beta\leftarrow\gamma)&=&\alpha\rhd\beta,\\
(\alpha\rightarrow\beta)\lhd\gamma&=&\beta\lhd\gamma,\\
(\alpha\leftarrow\beta)\rightarrow\gamma&=&\alpha\rightarrow(\beta\rightarrow\gamma),\\
(\alpha\rhd(\beta\rightarrow\gamma))\leftarrow(\beta\rhd\gamma)&=&(\alpha\leftarrow\beta)\rhd\gamma,\\
(\alpha\rhd(\beta\rightarrow\gamma))\lhd(\beta\rhd\gamma)&=&\alpha\lhd\beta,\\
(\alpha\leftarrow\beta)\leftarrow\gamma&=&\alpha\leftarrow(\beta\rightarrow\gamma),\\
(\alpha\lhd\beta)\rightarrow((\alpha\leftarrow\beta)\lhd\gamma)&=&\alpha\lhd(\beta\rightarrow\gamma),\\
(\alpha\lhd\beta)\rhd((\alpha\leftarrow\beta)\lhd\gamma)&=&\beta\rhd\gamma,\\
(\alpha\leftarrow\beta)\leftarrow\gamma&=&\alpha\leftarrow(\beta\leftarrow\gamma),\\
(\alpha\lhd\beta)\leftarrow((\alpha\leftarrow\beta)\lhd\gamma)&=&\alpha\lhd(\beta\leftarrow\gamma),\\
(\alpha\lhd\beta)\lhd((\alpha\leftarrow\beta)\lhd\gamma)&=&\beta\lhd\gamma.
\end{eqnarray*}
}

For the ambiguities $w_3$, $w_5$, $w_7$ and $w_9$, we write the associated compositions as
$$\Big(\phi^Q_{\gamma,\,\delta}(q|_{\lc u\rc_\alpha^T\lc v\rc_\beta^T}, w), \phi^T_{\alpha,\,\beta}(u, v)\Big)_{w_{Q,\,T}}.$$
Here for $Q,T\in\{R, S\}$ and $\alpha, \beta\in\Omega$,
\begin{align*}
\phi^Q_{\alpha,\,\beta}(u,v):=&\
{\lc u\rc_\alpha^Q\lc v\rc_\beta^Q}-{\lc\lc u\rc_{\alpha\rhd\beta}^R v\rc_{\alpha\rightarrow\beta}^Q}-{\lc u\lc v\rc_{\alpha\lhd\beta}^S\rc_{\alpha\leftarrow\beta}^Q},\\
\phi^T_{\alpha,\,\beta}(u,v):=&\
{\lc u\rc_\alpha^T\lc v\rc_\beta^T}-{\lc\lc u\rc_{\alpha\rhd\beta}^R v\rc_{\alpha\rightarrow\beta}^T}-{\lc u\lc v\rc_{\alpha\lhd\beta}^S\rc_{\alpha\leftarrow\beta}^T}.
\end{align*}
In more details,
\begin{eqnarray*}
\text{including compositions},\quad\qquad&\ \ &\text{ambiguities}\\
\Big(\phi^R_{\gamma,\,\delta}(q|_{\lc u\rc_\alpha^R\lc v\rc_\beta^R}, w), \phi^R_{\alpha,\,\beta}(u, v)\Big)_{w_{R,\, R}}=(f_{\gamma,\,\delta}(q|_{{\rbsla{\alpha}{\beta}{u}{v}}}, w), f_{\alpha,\,\beta}(u, v))_{w_3},&\ \  &w_{R,\,R}=w_3,\\
\Big(\phi^R_{\gamma,\,\delta}(q|_{\lc u\rc_\alpha^S\lc v\rc_\beta^S}, w), \phi^S_{\alpha,\,\beta}(u, v)\Big)_{w_{R, \, S}}=(f_{\gamma,\,\delta}(q|_{{{\rbslb{\alpha}{\beta}{u}{v}}}}, w), g_{\alpha,\,\beta}(u, v))_{w_5},&\ \  &w_{R,\,S}=w_5,\\
\Big(\phi^S_{\gamma,\,\delta}(q|_{\lc u\rc_\alpha^R\lc v\rc_\beta^R}, w), \phi^R_{\alpha,\,\beta}(u, v)\Big)_{w_{S,\,R}}=(g_{\gamma,\,\delta}(q|_{{{\rbsla{\alpha}{\beta}{u}{v}}}}, w), f_{\alpha,\,\beta}(u, v))_{w_7},&\ \  &w_{S,\,R}=w_7,\\
\Big(\phi^S_{\gamma,\,\delta}(q|_{\lc u\rc_\alpha^S\lc v\rc_\beta^S}, w), \phi^S_{\alpha,\,\beta}(u, v)\Big)_{w_{S,\,S}}=(g_{\gamma,\,\delta}(q|_{{\rbslb{\alpha}{\beta}{u}{v}}}, w), g_{\alpha,\,\beta}(u, v))_{w_9},&\ \  &w_{S,\,S}=w_9.
\end{eqnarray*}
Then we get
\allowdisplaybreaks{
\begin{eqnarray*}
&&\Big(\phi^Q_{\gamma,\,\delta}(q|_{\lc u\rc_\alpha^T\lc v\rc_\beta^T}, w), \phi^T_{\alpha,\,\beta}(u, v)\Big)_{w_{Q,\,T}}\\
&=&\phi^Q_{\gamma,\,\delta}(q|_{{\rbslt{\alpha}{\beta}{u}{v}}}, w)-\lc {q|_{\phi^T_{\alpha,\,\beta}(u, v)}}\rc^Q_\gamma\lc{w}\rc^Q_\delta\\
&=&\rbslq{\gamma}{\delta}{q_{\rbslt{\alpha}{\beta}{u}{v}}}{w}-\rbsrqy{\gamma}{\delta}{q_{\rbslt{\alpha}{\beta}{u}{v}}}{w}-\rbsrqz{\gamma}{\delta}{q_{\rbslt{\alpha}{\beta}{u}{v}}}{w}\\
&&-\lc q|_{\rbslt{\alpha}{\beta}{u}{v}-\rbsrty{\alpha}{\beta}{u}{v}-\rbsrtz{\alpha}{\beta}{u}{v}}\rc ^Q_{\gamma} \lc w\ \rc^Q_{\delta}\\
&=&-\rbsrqy{\gamma}{\delta}{q_{\rbslt{\alpha}{\beta}{u}{v}}}{w}-\rbsrqz{\gamma}{\delta}{q_{\rbslt{\alpha}{\beta}{u}{v}}}{w}\\
&&+\lc q|_{\rbsrty{\alpha}{\beta}{u}{v}}\rc^Q_{\gamma} \lc w\rc^Q_{\delta}+\lc q|_{\rbsrtz{\alpha}{\beta}{u}{v}}\rc ^Q_{\gamma}\lc w\rc^Q_{\delta}\\
&\equiv&
-\rbsrqy{\gamma}{\delta}{q|_{\rbsrty{\alpha}{\beta}{u}{v}}}{w}-\rbsrqy{\gamma}{\delta}{q|_{\rbsrtz{\alpha}{\beta}{u}{v}}}{w}\\
&&-\rbsrqz{\gamma}{\delta}{q|_{\rbsrty{\alpha}{\beta}{u}{v}}}{w}-\rbsrqz{\gamma}{\delta}{q|_{\rbsrtz{\alpha}{\beta}{u}{v}}}{w}\\
&&+\rbsrqy{\gamma}{\delta}{q|_{\rbsrty{\alpha}{\beta}{u}{v}}}{w}+\rbsrqz{\gamma}{\delta}{q|_{\rbsrty{\alpha}{\beta}{u}{v}}}{w} \\
&&+\rbsrqy{\gamma}{\delta}{q|_{\rbsrtz{\alpha}{\beta}{u}{v}}}{w}+\rbsrqz{\gamma}{\delta}{q|_{\rbsrtz{\alpha}{\beta}{u}{v}}}{w}\\
&=&0 \quad \mod(\mathbb{S}_{\Omega\, S}, w_{Q,\,T})\,\text{ for }\,Q,T\in\{R, S\}.
\end{eqnarray*}
}
\vskip-0.2in
\mref{it:orbsab} It follows from Theorem~\ref{Composition-Diamond lemma} and Item~\mref{it:orbsaa}.
\end{proof}

In particular, if $S_\omega=R_\omega\,\text{ for }\,\omega\in\Omega$, then an $\Omega$-Rota-Baxter system reduces to an $\Omega$-Rota-Baxter algebra of weight $0$. Free $\Omega$-Rota-Baxter algebras were constructed directly in~\mcite{FP}. Now we give a new method for this free object of weight zero.

\begin{coro}
Let $X$ be a set and $\Omega$ an extended diassociative semigroup. With the order $\leq_\db$ on $\frakM(\Omega, X)$,
\begin{enumerate}
\item  the set
\begin{equation*}
\mathbb{S}_\Omega :=\left\{\rbsla{\alpha}{\beta}{u}{v}-\rbsray{\alpha}{\beta}{u}{v}
-\rbsrac{\alpha}{\beta}{u}{v}\mid u, v\in \frakM(\Omega, X)\,\text{ and }\,\alpha,\beta\in\Omega \right\},
\end{equation*}
is a Gr\"{o}bner-Shirshov basis in $\bfk\frakM(\Omega, X)$.
\mlabel{it:gso}

\item the set
$$\irr(\mathbb{S}_{\Omega}):=\Big\{ w\in \frakM(\Omega, X) \mid  w \neq q|_{\overline{s}}
 \,  \text{ for  any }\, q \in \frakM^{\star}(\Omega, X) \,  \text{ and any }\, s\in \mathbb{S}_{\Omega} \Big\}$$
is a \bfk-basis of the free $\Omega$-Rota-Baxter algebra $\bfk\frakM(\Omega, X)/\Id(\bbs_\Omega)$ of weight zero on $X$.
\mlabel{it:gsob}
\end{enumerate}
\mlabel{coro:forba}
\end{coro}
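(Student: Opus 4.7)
The plan is to mimic the proof of Theorem~\mref{thm:gsb} in the simpler setting where only a single family of brackets $\lc\,\rc^R_\omega$ is present; this corresponds to the collapse $S_\omega = R_\omega$ that turns an $\Omega$-Rota-Baxter system into an $\Omega$-Rota-Baxter algebra of weight zero, in accordance with Remark~\mref{rk:orbs2orb}. First, I would note that the construction of the monomial order $\leq_\db$ transfers verbatim to $\frakM(\Omega, X)$: one omits every mention of $\lc\,\rc^S_\omega$ from the definition and from the verification, and Proposition~\mref{pp:mord} still provides a monomial order.

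For item~\mref{it:gso}, I would invoke the Composition-Diamond Lemma (Theorem~\ref{Composition-Diamond lemma}) and enumerate the possible compositions of elements of $\mathbb{S}_\Omega$. Writing $f_{\alpha,\,\beta}(u,v):=\rbsla{\alpha}{\beta}{u}{v}-\rbsray{\alpha}{\beta}{u}{v}-\rbsrac{\alpha}{\beta}{u}{v}$, whose leading monomial under $\leq_\db$ is $\rbsla{\alpha}{\beta}{u}{v}$, there is, up to symmetry, exactly one intersection composition $(f_{\alpha,\,\beta}(u,v),\,f_{\beta,\,\gamma}(v,w))_w$ at $w=\lc u\rc^R_\alpha\lc v\rc^R_\beta\lc w\rc^R_\gamma$, together with two including compositions of the form $(f_{\gamma,\,\delta}(q|_{\rbsla{\alpha}{\beta}{u}{v}},w),\,f_{\alpha,\,\beta}(u,v))$ and its left/right mirror.

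The intersection composition is then handled by exactly the calculation already performed for the pair $(w_1,w_2)$ in the proof of Theorem~\mref{thm:gsb}, with every occurrence of $\lc\,\rc^S_\omega$ replaced by $\lc\,\rc^R_\omega$; the reduction modulo $(\mathbb{S}_\Omega,\,w)$ produces precisely the fifteen identities characterizing an extended diassociative semigroup, which hold by hypothesis on $\Omega$. The including compositions are automatically trivial by the same symmetric cancellation argument used in the $w_3,\ldots,w_{10}$ portion of the proof of Theorem~\mref{thm:gsb}, now in the collapsed case $Q=T=R$.

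Item~\mref{it:gsob} then follows at once from item~\mref{it:gso} and Theorem~\ref{Composition-Diamond lemma}, once one observes that $\bfk\frakM(\Omega,X)/\Id(\mathbb{S}_\Omega)$ is by construction the free $\Omega$-Rota-Baxter algebra of weight zero on $X$: this combines the universal property of the free $\Omega$-operated algebra $\bfk\frakM(\Omega,X)$ with the fact that the relations in $\mathbb{S}_\Omega$ are exactly the weight-zero $\Omega$-Rota-Baxter axioms. The only real difficulty is bookkeeping---verifying that setting $S_\omega=R_\omega$ genuinely identifies $\mathbb{S}_{\Omega\,S}$ with $\mathbb{S}_\Omega$ and collapses the ten ambiguities $w_1,\ldots,w_{10}$ into the three listed above, so that no new composition needs to be checked---but this is routine given the structure of the proof of Theorem~\mref{thm:gsb}.
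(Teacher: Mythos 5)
Your proof is correct, but it takes a genuinely different route from the paper's. You verify directly that all compositions of $\mathbb{S}_\Omega$ in $\bfk\frakM(\Omega,X)$ are trivial: after the collapse $S_\omega=R_\omega$ the ten ambiguities of Theorem~\mref{thm:gsb} merge into one intersection ambiguity and one mirror pair of inclusion ambiguities, the intersection computation reproduces the fifteen extended-diassociative-semigroup identities (which hold by hypothesis -- note you only need the ``if'' direction here), and the inclusion compositions cancel exactly as in the $w_{Q,T}$ calculation with $Q=T=R$. The paper instead treats Theorem~\mref{thm:gsb} as a black box: it realizes $\bfk\frakM(\Omega,X)/\Id(\bbs_\Omega)$ as the quotient of $\bfk\rswx/\Id(\mathbb{S}_{\Omega\,S})$ by the identification relations $\obs{u}-\obr{u}$, runs this through the third isomorphism theorem and the known basis $\Irr(\mathbb{S}_{\Omega\,S})$, concludes that $\bfk\frakM(\Omega,X)\cong\bfk\Irr(\mathbb{S}_\Omega)\oplus\Id(\bbs_\Omega)$, and then obtains the Gr\"obner--Shirshov property from the equivalence (III)$\Leftrightarrow$(I) of Theorem~\ref{Composition-Diamond lemma} rather than from condition (I) directly. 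Your approach is more self-contained and establishes (I) without the intermediate quotient bookkeeping, at the cost of re-running (or at least carefully re-checking) the composition calculations in the collapsed setting; the paper's approach avoids any new computation but relies on correctly tracking the irreducible words through the chain of isomorphisms. Both are legitimate, and your observation that no new composition arises beyond the three collapsed ambiguities is the key point that makes the direct verification go through.
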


\begin{proof}
First, we have the following isomorphisms
\begin{align*}
&\ \bfk\frakM(\Omega, X)/\Id(\bbs_\Omega)\\
\cong&\ \bfk\frakM\left(\Omega_R\sqcup \Omega_S, X\right)\big/\Id\left(\bbs_{\Omega }\cup \{ \obs{u}-\obr{u} \mid u\in\frakM(\Omega_R\sqcup \Omega_S, X), \omega\in\Omega\} \right)\\
&\hspace{3cm}\text{(by Remark~\mref{rk:orbs2orb})}\\
\cong& \ \bfk\frakM\left(\Omega_R\sqcup \Omega_S, X\right)\big/\Id(\bbs_{\Omega })\Big/\Id\left(\bbs_{\Omega }\cup \{ \obs{u}-\obr{u} \mid u\in\frakM(\Omega_R\sqcup \Omega_S, X),  \omega\in\Omega\} \right)\big/\Id(\bbs_{\Omega })\\
&\hspace{3cm}\text{(by the third isomorphism theorem)}\\
\cong&\ \bfk\Irr(\bbs_{\Omega })\Big/\Id\big(\{ \obs{u}-\obr{u} \mid u\in\frakM(\Omega_R\sqcup \Omega_S, X), \omega\in\Omega\}\big)\\
&\hspace{3cm}\text{(by Theorem~\ref{thm:gsb})}\\
=&\ \bfk\Big\{ w\in \rswx \mid  w \neq
q|_{\overline{s}}\,  \text{ for  any }\, \ q \in \rswxs, s\in \mathbb{S}_{\Omega }\Big\}\\
 &\ \Big/\Id\big(\{ \obs{u}-\obr{u} \mid u\in\frakM(\Omega_R\sqcup \Omega_S, X), \omega\in\Omega\}\big)\\
\cong&\ \bfk\Big\{w\in\frak{M}(\Omega,\,X)\mid w\neq q|_{\bar{s}}\,\text{ for any }\, q\in\frak{M}^\star(\Omega,\,X), s\in \mathbb{S}_{\Omega}\Big\}\\
=&\ \bfk\Irr(\mathbb{S}_{\Omega}).
\end{align*}
Further, by Theorem~\ref{Composition-Diamond lemma}, $\mathbb{S}_{\Omega}$ is a Gr\"obner-Shirshov basis in $\bfk\frak{M}(\Omega,\, X)$ and so Item~\ref{it:gsob} holds.
\end{proof}

\section{Application of the free $\Omega$-Rota-Baxter system}
\label{sec:application}
In this section, as applications of Theorem~\mref{thm:gsb}, we propose the concepts of Rota-Baxter system family algebras and matching Rota-Baxter systems, and construct their free objects. As examples, free Rota-Baxter systems, free Rota-Baxter family algebras and free matching Rota-Baxter algebras are reconstructed.

\subsection{Gr\"{o}bner-Shirshov bases for Rota-Baxter system family algebras}
\label{sub:family}
The concept of a Rota-Baxter family algebra is a generalization of Rota-Baxter algebra, which plays an important role in quantum renormalization~\mcite{FBP}.

\begin{defn}\mcite{FBP, Guo09}\mlabel{def:pp}
Let $\Omega$ be a semigroup. A pair $(A, (R_\omega)_{\omega\in\Omega})$ consisting of an algebra $A$ and
a collection of linear operators $R_\omega:A\rightarrow A, \omega\in \Omega$
is called a {\bf Rota-Baxter family algebra of weight $\lambda$} if
\begin{equation*}
R_{\alpha}(a)R_{\beta}(b)=R_{\alpha\beta}\left( R_{\alpha}(a)b  + a R_{\beta}(b) + \lambda ab \right),\, \text{ for }\, a, b \in A\,\text{ and }\, \alpha,\, \beta \in \Omega.
\mlabel{eq:RBF}
\end{equation*}
\end{defn}

As a Rota-Baxter system is a generalization of a Rota-Baxter algebra of weight zero, we propose the following concept.

\begin{defn}
Let $\Omega$ be a semigroup. A pair $(A, (R_\omega, S_\omega)_{\omega\in\Omega})$ consisting of an algebra $A$ and two collections of linear operators $R_\omega, S_\omega: A \rightarrow A, \omega\in \Omega$ is called a {\bf Rota-Baxter system family algebra} if, for all $a, b \in A$ and $\alpha,\beta\in\Omega$,
\begin{equation*}
\begin{aligned}
&R_\alpha(a) R_\beta(b)=R_{\alpha\beta}(R_\alpha(a) b+a S_\beta(b)),\\
&S_\alpha(a) S_\beta(b)=S_{\alpha\beta}(R_\alpha(a) b+a S_\beta(b)).
\end{aligned}
\end{equation*}
\mlabel{defn:RS}
\end{defn}

The following is an example of a Rota-Baxter system family algebra.

\begin{exam}\label{exam:RBFsystem}
Let $(A, (R_\omega, S_\omega)_{\omega\in\Omega})$ be an $\Omega$-Rota-Baxter system.
Further, if the four binary operations
\[\leftarrow,\rightarrow,\lhd,\rhd: \Omega\times \Omega \rightarrow \Omega\]
satisfy
$$\alpha\leftarrow\beta=\alpha\rightarrow\beta=:\alpha\cdot\beta\,\text{ and }\, \alpha\lhd\beta=\beta, \alpha\rhd\beta=\alpha.$$
Then $\Omega$ is an extended diassociative semigroup and $(A, (R_\omega, S_\omega)_{\omega\in\Omega})$ reduces to a Rota-Baxter system family algebra.
\end{exam}

Rota-Baxter family algebras are examples of Rota-Baxter system family algebras.

\begin{prop}
Let $\Omega$ be a semigroup and $\lambda\in\bfk.$
The pairs $(A, (R_\omega, R_\omega+\lambda \mathrm{id})_{\omega\in\Omega})$ and $(A, (R_\omega+\lambda \mathrm{id}, R_\omega)_{\omega\in\Omega})$ are Rota-Baxter system family algebras  if and only if
 $(A, (R_\omega)_{\omega\in\Omega})$ is a Rota-Baxter family algebra of weight $\lambda$
\mlabel{prop:fesf}
\end{prop}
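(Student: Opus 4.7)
The plan is to establish the equivalence by direct substitution, observing that once the constraint $S_\omega = R_\omega + \lambda\id$ (or its dual $R_\omega = S_\omega + \lambda\id$) is imposed, the two axioms of Definition~\mref{defn:RS} both collapse to the single Rota-Baxter family axiom of Definition~\mref{def:pp}.

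For the ``only if'' direction, assume $(A,(R_\omega, R_\omega + \lambda\id)_{\omega\in\Omega})$ is a Rota-Baxter system family algebra. Substituting $S_\beta = R_\beta + \lambda\id$ into the first axiom $R_\alpha(a)R_\beta(b) = R_{\alpha\beta}(R_\alpha(a)b + aS_\beta(b))$ immediately yields
\[
R_\alpha(a)R_\beta(b) = R_{\alpha\beta}\bigl(R_\alpha(a)b + a R_\beta(b) + \lambda ab\bigr),
\]
which is exactly the Rota-Baxter family axiom of weight $\lambda$. The second pair $(R_\omega + \lambda\id, R_\omega)$ is handled symmetrically, this time reading off the family axiom from its second defining equation $S_\alpha(a)S_\beta(b) = S_{\alpha\beta}(R_\alpha(a)b + aS_\beta(b))$ after substituting $R'_\omega = R_\omega + \lambda\id$ and $S'_\omega = R_\omega$.

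For the ``if'' direction, assume $(A,(R_\omega)_{\omega\in\Omega})$ is a Rota-Baxter family algebra of weight $\lambda$, and write $X := R_\alpha(a)b + aR_\beta(b) + \lambda ab$, so the hypothesis reads $R_\alpha(a)R_\beta(b) = R_{\alpha\beta}(X)$. For the pair $(R_\omega, R_\omega + \lambda\id)$, the first system-family axiom reproduces the hypothesis directly, while the second becomes
\[
\bigl(R_\alpha(a) + \lambda a\bigr)\bigl(R_\beta(b) + \lambda b\bigr) = \bigl(R_{\alpha\beta} + \lambda\id\bigr)(X).
\]
Expanding both sides gives $R_\alpha(a)R_\beta(b) + \lambda X = R_{\alpha\beta}(X) + \lambda X$, and cancelling the common $\lambda X$ reduces the identity to the hypothesis. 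The pair $(R_\omega + \lambda\id, R_\omega)$ is verified by a parallel expansion of its first axiom, since its second axiom gives the family identity at once.

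No step is expected to be delicate; the entire argument is algebraic bookkeeping with additive shifts of linear operators. The only point requiring any attention is the observation that, under the substitution $S_\omega = R_\omega + \lambda\id$, the second of the two system-family axioms imposes no condition beyond the first, because the extra shift-generated terms assemble into the same quantity $\lambda X$ on each side and cancel, as made explicit above.
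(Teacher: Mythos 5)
Your proof is correct and follows essentially the same route as the paper: substitute $S_\omega=R_\omega+\lambda\mathrm{id}$ (resp.\ its dual) into the two axioms of Definition~\ref{defn:RS}, observe that the first gives the weight-$\lambda$ family identity and the second reduces to it after the $\lambda X$ terms cancel, and handle the second pair by symmetry. The paper merely phrases both directions at once as a chain of equivalences, whereas you separate them; the computation is identical.
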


\begin{proof}
By symmetry, we only prove the first part.
In terms of Definition~\mref{defn:RS}, the pair $(A, (R_\omega, R_\omega+\lambda \id)_{\omega\in\Omega})$ is a Rota-Baxter system family algebra if and only if
\begin{align*}
 R_\alpha(a)R_\beta(b)=&\ R_{\alpha\beta}(R_\alpha(a)b+aR_\beta(b)+\lambda ab),\\
(R_\alpha+\lambda\id)(a)(R_\beta+\lambda\id)(b)=&\ (R_{\alpha\beta}+\lambda\id)(R_\alpha(a)b+a(R_\beta+\lambda\id)(b)),
\end{align*}
which are equivalent to
$$R_\alpha(a)R_\beta(b)= R_{\alpha\beta}(R_\alpha(a)b+aR_\beta(b)+\lambda ab),$$
as required.
\end{proof}

As an application, we obtain the following result.

\begin{prop}
Let $X$ be a set and $\Omega$ a semigroup.
With the monomial order $\leq_\db$ on $\rswx$,
\begin{enumerate}
\item the set
$$
\mathbb{S}_{SF} :=\left\{
\left.
 \begin{array}{ll}
\lc u\rc^R_\alpha \lc v\rc ^R_\beta- \lc \lc u\rc ^R_\alpha v\rc^R_{\alpha\beta} - \lc u \lc v\rc^S_\beta\rc^R_{\alpha\beta}\\
\lc u\rc^S_\alpha \lc v\rc ^S_\beta- \lc \lc u\rc ^R_\alpha v\rc^S_{\alpha\beta} - \lc u \lc v\rc^S_\beta\rc^S_{\alpha\beta}
\end{array}
\right|u, v\in \rswx\,\text{ and }\,\alpha,\beta\in\Omega \right\}
$$
is a Gr\"{o}bner-Shirshov basis in $\bfk\rswx$. \mlabel{it:gssf}

\item the set
$$\irr(\mathbb{S}_{SF}):=\Big\{ w\in \rswx \mid  w \neq
q|_{\overline{s}}\,  \text{ for  any }\, \ q \in \rswxs\,  \text{ and any }\, s\in \mathbb{S}_{SF}\Big\}$$
is a $\bfk$-basis of the free Rota-Baxter system family algebra $\bfk\rswx/\Id(\mathbb{\mathbb{S}}_{SF})$ on $X$. \mlabel{it:gssf1}
\end{enumerate}
\mlabel{prop:RBF}
\end{prop}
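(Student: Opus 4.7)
The strategy is to recognize that Proposition~\mref{prop:RBF} is exactly the specialization of Theorem~\mref{thm:gsb} to the extended diassociative semigroup structure singled out in Example~\mref{exam:RBFsystem}. So the plan is to reduce everything mechanically to the main theorem rather than re-run the composition analysis.

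First, I would record the observation from Example~\mref{exam:RBFsystem}: given any semigroup $(\Omega,\cdot)$, setting
\[
\alpha\leftarrow\beta \,=\,\alpha\rightarrow\beta\,:=\,\alpha\beta,\qquad \alpha\rhd\beta:=\alpha,\qquad\alpha\lhd\beta:=\beta,
\]
one obtains an extended diassociative semigroup (each of the fifteen axioms in Definition~\ref{defn:diassociative semigroups} collapses either to associativity of $\Omega$ or to a triviality among the projections). Thus the hypothesis of Theorem~\mref{thm:gsb}\mref{it:orbsaa} is met and the general set $\mathbb{S}_{\Omega\,S}$ is a Gr\"obner–Shirshov basis in $\bfk\rswx$ with respect to the monomial order $\leq_\db$.

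Second, I would substitute the chosen binary operations into the generic generators of $\mathbb{S}_{\Omega\,S}$. Under this substitution
\[
\rbsray{\alpha}{\beta}{u}{v}=\lc\lc u\rc^R_{\alpha} v\rc^R_{\alpha\beta},\qquad \rbsraz{\alpha}{\beta}{u}{v}=\lc u\lc v\rc^S_{\beta}\rc^R_{\alpha\beta},
\]
and analogously with an outer $S$–bracket in the second family, so the two displayed generators of $\mathbb{S}_{\Omega\,S}$ become precisely the two generators listed in $\mathbb{S}_{SF}$. Hence $\mathbb{S}_{SF}$ is the image of $\mathbb{S}_{\Omega\,S}$ under the specialization, proving item~\mref{it:gssf}.

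For item~\mref{it:gssf1}, I would first identify the free Rota-Baxter system family algebra on $X$ with $\bfk\rswx/\Id(\mathbb{S}_{SF})$: this is a direct consequence of the fact that $(\bfk\rswx,\{\lc\,\rc^R_\omega,\lc\,\rc^S_\omega\})$ is the free object with two $\Omega$-indexed families of operators, and $\mathbb{S}_{SF}$ is obtained by writing out both defining axioms of Definition~\mref{defn:RS} on all pairs of bracketed words. Combined with item~\mref{it:gssf} and the Composition–Diamond lemma (Theorem~\ref{Composition-Diamond lemma}, especially the equivalence (I)$\Leftrightarrow$(III)), we conclude that $\irr(\mathbb{S}_{SF})$ is a $\bfk$-basis of the quotient, as asserted. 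The only place requiring any care is verifying that the semigroup structure really does satisfy all fifteen axioms of an extended diassociative semigroup; everything else is a transparent specialization, so no new composition calculation is needed.
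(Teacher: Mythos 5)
Your proposal is correct and follows exactly the paper's own route: the paper proves item (a) by citing Example~\ref{exam:RBFsystem} (the specialization $\alpha\leftarrow\beta=\alpha\rightarrow\beta=\alpha\beta$, $\alpha\rhd\beta=\alpha$, $\alpha\lhd\beta=\beta$ making $\Omega$ an extended diassociative semigroup) together with Theorem~\ref{thm:gsb}, and item (b) by the Composition--Diamond lemma. You merely spell out the verification of the fifteen axioms and the substitution into $\mathbb{S}_{\Omega\,S}$ more explicitly than the paper does, which is a faithful expansion rather than a different argument.
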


\begin{proof}
The first item follows from Example~\ref{exam:RBFsystem} and Theorem~\mref{thm:gsb}.
The second item is obtained from the first item and Theorem~\ref{Composition-Diamond lemma}.
\end{proof}

As a consequence of Proposition~\ref{prop:RBF}, we obtain a new proof of the following result.

\begin{prop}\cite{QC}
 Let $X$ be a set and $\Omega$ a trivial semigroup with only one element.
With the monomial order $\leq_\db$ on $\rswx$,
\begin{enumerate}
\item the set
\begin{equation*}
\mathbb{S}_S:=\left\{
\left.
 \begin{array}{ll}
\lf u\rf^R\lf v\rf^R-\lf\lf u\rf^R v\rf^R-\lf u\lf v\rf^S\rf^R\\
\lf u\rf^S\lf v\rf^S-\lf \lf u\rf^R v \rf^S-\lf u\lf v\rf^S\rf^S \\
\end{array}
\right|u, v\in \rswx \right\}
\end{equation*}
is a Gr\"{o}bner-Shirshov basis in $\bfk\rswx$. \mlabel{it:orbsbb}

\item the set
$$\Irr(\mathbb{S}_S) := \{ w\in \rswx \mid  w \neq
q|_{\overline{s}}  \,  \text{ for  any }\, \ q \in \rswxs\,  \text{ and any }\, s\in \mathbb{S}_S\}$$
is a $\bfk$-basis of the free Rota-Baxter system $\bfk\rswx/\Id(\mathbb{\mathbb{S}}_S)$. \mlabel{it:orbsbb}
\end{enumerate}
\mlabel{prop:frbs}
\end{prop}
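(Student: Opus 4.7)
The plan is to derive Proposition~\mref{prop:frbs} as an immediate specialization of Proposition~\mref{prop:RBF}, which has already been established via the main theorem. The key observation is that when $\Omega = \{\omega\}$ is a trivial semigroup, the product $\alpha\beta$ collapses to $\omega$ for every $\alpha,\beta\in\Omega$, so one $\omega$-indexed copy of each operator is all that survives. Under this identification, Definition~\mref{defn:RS} of a Rota-Baxter system family algebra reduces exactly to Definition~\mref{defn:mrba} of a Rota-Baxter system (with the single subscript $\omega$ suppressed), and the generating set $\mathbb{S}_{SF}$ of Proposition~\mref{prop:RBF} reduces to $\mathbb{S}_S$.

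First I would spell out this specialization carefully: take $\Omega = \{\omega\}$, so $\Omega_R = \{\lc\,\rc^R\}$, $\Omega_S = \{\lc\,\rc^S\}$, and $\rswx$ becomes the free $\{R,S\}$-operated monoid on $X$. The relations $\lc u\rc^R_\alpha\lc v\rc^R_\beta = \lc\lc u\rc^R_\alpha v\rc^R_{\alpha\beta} + \lc u\lc v\rc^S_\beta\rc^R_{\alpha\beta}$ and the analogous $S$-relation specialize to precisely the two equations defining a Rota-Baxter system. Therefore the free Rota-Baxter system on $X$ coincides with the free Rota-Baxter system family algebra on $X$ over the one-element semigroup.

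Item~(a) then follows directly: by Proposition~\mref{prop:RBF}\mref{it:gssf}, $\mathbb{S}_{SF}$ is a Gr\"obner-Shirshov basis in $\bfk\rswx$ with respect to $\leq_\db$; after dropping the indices, this is exactly $\mathbb{S}_S$. Item~(b) follows immediately from Item~(a) together with the Composition-Diamond Lemma (Theorem~\mref{Composition-Diamond lemma}), or equivalently from Proposition~\mref{prop:RBF}\mref{it:gssf1}: the set $\Irr(\mathbb{S}_S)$ is a $\bfk$-basis of the quotient $\bfk\rswx/\Id(\mathbb{S}_S)$, and by the first paragraph this quotient is the free Rota-Baxter system on $X$.

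Since Proposition~\mref{prop:RBF} has already been proved as a consequence of Theorem~\mref{thm:gsb} and the observation in Example~\mref{exam:RBFsystem}, there is no real obstacle here; the only thing to check is the bookkeeping that a trivial semigroup trivially satisfies the extended diassociative semigroup axioms (which it does, vacuously, since all five operations take the constant value $\omega$) and that the specialization of the two families of relations matches $\mathbb{S}_S$ on the nose. The proof is therefore essentially a one-line deduction from Proposition~\mref{prop:RBF} together with the identification of the defining relations.
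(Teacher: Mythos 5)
Your proposal is correct and follows exactly the paper's own route: the paper proves Proposition~\mref{prop:frbs} in one line by specializing Proposition~\mref{prop:RBF} to the trivial one-element semigroup, which is precisely your argument (you simply spell out the bookkeeping the paper leaves implicit).
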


\begin{proof}
It follows from Proposition~~\ref{prop:RBF} by taking $\Omega$ to be a trivial semigroup.
\end{proof}

If $S_\omega=R_\omega+\lambda\id\,\text{ for }\,\omega\in\Omega$, then a Rota-Baxter system family algebra reduces to a Rota-Baxter family algebra by Proposition~\mref{prop:fesf}.

\begin{prop}\cite{ZG}\label{prop:gsbases}
Let $X$ be a set and $\Omega$ a semigroup. With the order $\leq_\db$ on $\frakM(\Omega, X)$,
\begin{enumerate}
\item  the set
$$\mathbb{S}_F :=\left\{\lc u\rc_\alpha \lc v\rc _\beta- \lc \lc u\rc _\alpha v\rc_{\alpha\beta} - \lc u \lc v\rc_\beta\rc_{\alpha\beta}- \lambda\lc u  v\rc_{\alpha\beta}\mid  u, v\in \frakM(\Omega, X)\,\text{ and }\,\alpha, \beta\in \Omega \right\}.$$
is a Gr\"{o}bner-Shirshov basis in $\bfk\frakM(\Omega, X)$.
\mlabel{it:gsf}

\item the set
$$\irr(\mathbb{S}_{F}):=\Big\{ w\in \frakM(\Omega, X) \mid  w \neq q|_{\overline{s}}
\,  \text{ for  any }\, \ q \in \frakM^{\star}(\Omega, X)\,  \text{ and any }\, s\in \mathbb{S}_{F} \Big\}$$
is a \bfk-basis of the free Rota-Baxter family algebra $\bfk\frakM(\Omega, X)/\Id(\bbs_F)$ on $X$.
\mlabel{it:gsfb}
\end{enumerate}
\label{prop:rbf}
\end{prop}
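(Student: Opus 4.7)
My plan is to deduce both items from Proposition~\ref{prop:RBF} by mimicking the chain-of-isomorphisms argument used in Corollary~\ref{coro:forba}, rather than verifying the compositions of $\mathbb{S}_F$ directly. The key observation is that Example~\ref{exam:RBFsystem} tells us that a semigroup $(\Omega,\cdot)$, equipped with $\alpha\leftarrow\beta=\alpha\rightarrow\beta:=\alpha\cdot\beta$, $\alpha\lhd\beta:=\beta$ and $\alpha\rhd\beta:=\alpha$, is an extended diassociative semigroup, and under these operations a Rota-Baxter system family algebra is the same thing as an $\Omega$-Rota-Baxter system. Proposition~\ref{prop:fesf} then identifies a Rota-Baxter family algebra of weight $\lambda$ with a Rota-Baxter system family algebra for which $S_\omega=R_\omega+\lambda\,\mathrm{id}$.

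In the free setting this means that the free Rota-Baxter family algebra on $X$ is obtained from the free Rota-Baxter system family algebra $\bfk\rswx/\Id(\mathbb{S}_{SF})$ by further quotienting by the relations $T:=\{\lc u\rc^S_\omega-\lc u\rc^R_\omega-\lambda u\mid u\in\rswx,\ \omega\in\Omega\}$. Applying the third isomorphism theorem together with Proposition~\ref{prop:RBF}, I obtain
\[\bfk\frak{M}(\Omega,X)/\Id(\mathbb{S}_F)\cong \bfk\rswx/\Id(\mathbb{S}_{SF}\cup T)\cong \bfk\Irr(\mathbb{S}_{SF})/\Id(T).\]
Collapsing $\lc\cdot\rc^S_\omega$ to $\lc\cdot\rc^R_\omega+\lambda(\cdot)$ and relabeling $\lc\cdot\rc^R_\omega$ as $\lc\cdot\rc_\omega$ identifies the last quotient with $\bfk\Irr(\mathbb{S}_F)$ as a $\bfk$-module. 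This gives item~\ref{it:gsfb}, and item~\ref{it:gsf} then follows from the equivalence (I)$\Leftrightarrow$(III) in Theorem~\ref{Composition-Diamond lemma}.

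The main technical point, and expected obstacle, is verifying that both generators of $\mathbb{S}_{SF}$ reduce modulo $T$ to the single generator of $\mathbb{S}_F$. For the $R$-version of $\mathbb{S}_{SF}$ one only substitutes $\lc v\rc^S_\beta=\lc v\rc^R_\beta+\lambda v$ in the rightmost summand, and the term $-\lambda\lc uv\rc^R_{\alpha\beta}$ appears at once. For the $S$-version one rewrites four $S$-brackets via $T$, expands, and must check that the resulting extra $\lambda$- and $\lambda^2$-contributions cancel to leave exactly the generator of $\mathbb{S}_F$ (up to relabeling). All remaining steps are formal and parallel the argument of Corollary~\ref{coro:forba} verbatim.
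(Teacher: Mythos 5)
Your proposal is correct and follows essentially the same route as the paper: the paper's proof of Proposition~\ref{prop:rbf} is exactly the chain of isomorphisms $\bfk\frak{M}(\Omega,X)/\Id(\mathbb{S}_F)\cong\bfk\rswx/\Id(\mathbb{S}_{SF}\cup T)\cong\bfk\Irr(\mathbb{S}_{SF})/\Id(T)\cong\bfk\Irr(\mathbb{S}_F)$ obtained from Proposition~\ref{prop:fesf}, the third isomorphism theorem and Proposition~\ref{prop:RBF}, with the Gr\"obner--Shirshov property of $\mathbb{S}_F$ then read off from Theorem~\ref{Composition-Diamond lemma}. The cancellation of the extra $\lambda$- and $\lambda^2$-terms that you flag as the main technical point is precisely the computation already carried out in Proposition~\ref{prop:fesf}, which the paper cites for the first isomorphism.
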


\begin{proof}
First, we obtain
\begin{align*}
&\ \bfk\frakM(\Omega, X)/\Id(\bbs_F)\\
\cong& \ \bfk\frakM\left(\Omega_R\sqcup \Omega_S, X\right)\big/\Id\left(\bbs_{F}\cup \{ \obs{u}-\obr{u} -\lambda u \mid u\in\frakM(\Omega_R\sqcup \Omega_S, X), \omega\in\Omega\} \right)\\
&\hspace{3cm}\text{(by Proposition~\mref{prop:fesf})}\\
\cong&\  \bfk\frakM\left(\Omega_R\sqcup \Omega_S, X\right)\big/\Id(\bbs_{F})\Big/\Id\left(\bbs_{F}\cup \{ \obs{u}-\obr{u} -\lambda u \mid u\in\frakM(\Omega_R\sqcup \Omega_S, X),\omega\in\Omega\} \right)\big/\Id(\bbs_{F})\\
&\hspace{3cm}\text{(by the third isomorphism theorem)}\\
\cong&\ \bfk\Irr(\bbs_{F})\Big/\Id\big(\{ \obs{u}-\obr{u} -\lambda u \mid u\in\frakM(\Omega_R\sqcup \Omega_S, X), \omega\in\Omega\}\big)\\
&\hspace{3cm}\text{(by Proposition~\ref{prop:RBF})}\\
=&\ \bfk\Big\{ w\in \rswx \mid  w \neq
q|_{\overline{s}}\,  \text{ for  any }\, \ q \in \rswxs, s\in \mathbb{S}_{F}\Big\}\\
&\ \Big/\Id\big(\{ \obs{u}-\obr{u} -\lambda u \mid u\in\frakM(\Omega_R\sqcup \Omega_S, X),\omega\in\Omega\}\big)\\
\cong&\ \bfk\Big\{w\in\frak{M}(\Omega,\,X)\mid w\neq q|_{\bar{s}}\,\text{ for any }\, q\in\frak{M}^\star(\Omega,\,X),s\in \mathbb{S}_{F}\Big\}\\
=&\ \bfk\Irr(\mathbb{S}_{F}).
\end{align*}
Further, by Theorem~\ref{Composition-Diamond lemma}, $\mathbb{S}_{F}$ is a Gr\"obner-Shirshov basis in $\bfk\frak{M}(\Omega,\, X)$ and hence Item~\mref{it:gsfb} holds.
\end{proof}

\subsection{Gr\"{o}bner-Shirshov bases for matching Rota-Baxter systems}
This subsection is devoted to supply Gr\"{o}bner-Shirshov bases for \mrbss.
Let us first review the concept of matching Rota-Baxter algebras.

\begin{defn}\cite{ZGG}
Let $\Omega$ be a set and $(\lambda_\omega)_{\omega\in\Omega}$ be a collection of elements in $\bfk$.
A pair $(A, (R_\omega)_{\omega\in\Omega})$ consisting of an algebra $A$ and a collection of linear operators
$R_\omega: A \rightarrow A, \omega\in \Omega$ is called a {\bf matching Rota-Baxter algebra of  weight $(\lambda_\omega)_{\omega\in\Omega}$} if, for all $a, b \in A$, $\alpha,\beta\in \Omega$,
$$R_\alpha(a)R_\beta(b)=R_\beta(R_\alpha(a)b)+R_\alpha(aR_\beta(b))+\lambda_\beta R_\alpha(ab).$$
\end{defn}

Combining the above concept and Definition~\ref{defn:mrba}, we propose

\begin{defn}\mlabel{defn:matching}
Let $\Omega$ be a set.
A pair $(A, (R_\omega, S_\omega)_{\omega\in\Omega})$ consisting of an algebra $A$ and two collections of linear operators
$R_\omega, S_\omega: A \rightarrow A, \omega\in \Omega$ is called a {\bf \mrbs} if, for all $a, b \in A$, $\alpha,\beta\in \Omega$,
\begin{align*}
&R_\alpha(a)R_\beta(b)=R_\beta(R_\alpha(a)b)+R_\alpha(aS_\beta(b)), \\
&S_\alpha(a)S_\beta(b)=S_\beta(R_\alpha(a)b)+S_\alpha(aS_\beta(b)).
\end{align*}
\end{defn}

\begin{remark}
In the above concept, if $S_\omega=R_{\omega}+\lambda_\omega\id\,\text{ for }\,\omega\in\Omega$, then we recover the notation of a matching Rota-Baxter algebra of weight $(\lambda_\omega)_{\omega\in \Omega}$.
\end{remark}

The following is an example of a matching Rota-Baxter system.

\begin{exam}\label{exam:mRBsystem}
Let $(A, (R_\omega, S_\omega)_{\omega\in\Omega})$ be an $\Omega$-Rota-Baxter system.
Further, if the four binary operations
\[\leftarrow,\rightarrow,\lhd,\rhd: \Omega\times \Omega \rightarrow \Omega\]
satisfy
$$\alpha\rightarrow\beta=\alpha\lhd\beta=\beta\,\text{ and }\, \alpha\leftarrow\beta=\alpha\rhd\beta=\alpha.$$
Then  $\Omega$ is an extended diassociative semigroup and $(A, (R_\omega, S_\omega)_{\omega\in\Omega})$ reduces to a matching Rota-Baxter system.
\end{exam}

A matching Rota-Baxter algebra is a special case of a matching Rota-Baxter system.

\begin{prop}
Let $\Omega$ be a set and $(\lambda_\omega)_{\omega\in\Omega}$ be a collection of elements in $\bfk$.
The pairs $(A, (R_\omega, R_\omega+\lambda_\omega \mathrm{id})_{\omega\in\Omega})$ and $(A, (R_\omega+\lambda_\omega \mathrm{id}, R_\omega)_{\omega\in\Omega})$ are matching Rota-Baxter systems if and only if
$(A, (R_\omega)_{\omega\in\Omega})$ is a matching Rota-Baxter algebra of weight $(\lambda_\omega)_{\omega\in\Omega}$.
\mlabel{prop:mesm}
\end{prop}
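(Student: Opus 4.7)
The plan is to mimic the proof strategy of Proposition~\mref{prop:fesf}: by symmetry (swapping the roles of $R_\omega$ and $S_\omega$ simultaneously in both defining identities of a matching Rota-Baxter system) it suffices to treat only the first statement, namely the pair $(A,(R_\omega, R_\omega+\lambda_\omega\id)_{\omega\in\Omega})$.

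First I would substitute $S_\omega := R_\omega + \lambda_\omega\id$ directly into the two defining axioms of a matching Rota-Baxter system from Definition~\mref{defn:matching}. The first axiom
\[R_\alpha(a)R_\beta(b)=R_\beta(R_\alpha(a)b)+R_\alpha\bigl(a\,S_\beta(b)\bigr)\]
becomes
\[R_\alpha(a)R_\beta(b)=R_\beta\bigl(R_\alpha(a)b\bigr)+R_\alpha\bigl(aR_\beta(b)\bigr)+\lambda_\beta R_\alpha(ab),\]
which is exactly the matching Rota-Baxter identity of weight $(\lambda_\omega)_{\omega\in\Omega}$.

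Next I would expand the second axiom, $S_\alpha(a)S_\beta(b)=S_\beta(R_\alpha(a)b)+S_\alpha(aS_\beta(b))$, using $S_\omega=R_\omega+\lambda_\omega\id$. Both sides acquire the same cross terms $\lambda_\beta R_\alpha(a)b$, $\lambda_\alpha a R_\beta(b)$ and $\lambda_\alpha\lambda_\beta ab$ (coming on the left from expanding the product, and on the right from the outer $\lambda$-shifts on the single-argument images). After cancellation, the remaining identity is again exactly the matching Rota-Baxter identity displayed above. Hence both axioms collapse to the single matching Rota-Baxter relation, proving the equivalence in the stated direction.

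The converse direction is then immediate by reading the two verifications backward. I do not anticipate a genuine obstacle: the only thing to be careful about is bookkeeping the $\lambda_\alpha$ versus $\lambda_\beta$ coefficients correctly on each side of the expanded $S_\alpha(a)S_\beta(b)$ equation, so that the cancellation indeed leaves precisely the weight-$(\lambda_\omega)_{\omega\in\Omega}$ matching axiom and nothing extraneous. The second form, with the roles of $R_\omega$ and $S_\omega$ interchanged, follows by the symmetry noted at the outset.
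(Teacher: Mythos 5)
Your proposal is correct and follows essentially the same route as the paper: substitute $S_\omega=R_\omega+\lambda_\omega\id$ into the two axioms of Definition~\ref{defn:matching}, observe that the first is literally the weight-$(\lambda_\omega)_{\omega\in\Omega}$ matching identity, and that the second reduces to the same identity after the cross terms $\lambda_\beta R_\alpha(a)b$, $\lambda_\alpha aR_\beta(b)$ and $\lambda_\alpha\lambda_\beta ab$ cancel. The cancellation you describe checks out, so no gap remains.
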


\begin{proof}
We just show the first one, as the second one is similar.
By Definition~\mref{defn:matching}, the pair $(A, (R_\omega, R_\omega+\lambda_\omega \mathrm{id})_{\omega\in\Omega})$ is a matching Rota-Baxter system if and only if
\begin{align*}
R_\alpha(a)R_\beta(b)=&\ R_{\beta}(R_\alpha(a)b)+R_{\alpha}(aR_\beta(b)+\lambda_\beta a b),\\
(R_\alpha(a)+\lambda_\alpha a)(R_\beta(b)+\lambda_\beta b)=&\  (R_{\beta}+\lambda_\beta \id)(R_\alpha(a)b)+(R_{\alpha}+\lambda_{\alpha} \id)(a(R_\beta+\lambda_\beta\id)(b)),
\end{align*}
which are equivalent to
$$R_\alpha(a)R_\beta(b)= R_{\beta}(R_\alpha(a)b)+R_{\alpha}(aR_\beta(b)+\lambda_\beta a b).$$
This shows that $(A, (R_\omega)_{\omega\in\Omega})$ is a matching Rota-Baxter algebra of weight $(\lambda_\omega)_{\omega\in\Omega}$.
\end{proof}

As a direct consequence, we have

\begin{prop}
Let $X$ be a set and $\Omega$ a nonempty set.
With the monomial order $\leq_{_{{\rm db}}}$ on $\rswx$,
\begin{enumerate}
\item
the set
$$
\mathbb{S}_{MS} :=\left\{
\left.
 \begin{array}{ll}
\lc u\rc ^ R_\alpha \lc v \rc^R_\beta-\lc \lc u\rc ^R_\alpha v\rc^R_\beta-\lc u\lc v\rc ^S_\beta\rc^R_\alpha\\
\lc u\rc ^ S_\alpha \lc v \rc^S_\beta-\lc \lc u\rc ^R_\alpha v\rc^S_\beta-\lc u\lc v\rc ^S_\beta\rc^S_\alpha \\
\end{array}
\right|u, v\in \rswx\,\text{ and }\,\alpha,\beta\in\Omega \right\}
$$
is a Gr\"{o}bner-Shirshov basis in $\bfk\rswx$.

\item the set $$
\irr(\mathbb{S}_{MS}) :=\Big\{ w\in \rswx \mid  w \neq
q|_{\overline{s}}
\,  \text{ for  any }\, q \in \rswxs\,  \text{ and any }\, s\in \mathbb{S}_{MS}\Big\}$$
is a $\bfk$-basis of the free \mrbs $\bfk\rswx/\Id(\mathbb{\mathbb{S}}_{MS})$.
\end{enumerate}
\mlabel{prop:fmrb}
\end{prop}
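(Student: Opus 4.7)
The strategy is to view this as a direct specialization of Theorem~\mref{thm:gsb} via Example~\mref{exam:mRBsystem}, mirroring how Proposition~\mref{prop:RBF} was deduced from Theorem~\mref{thm:gsb} via Example~\ref{exam:RBFsystem}. The plan is to substitute the four binary operations
$$\alpha\rightarrow\beta=\alpha\lhd\beta=\beta,\quad \alpha\leftarrow\beta=\alpha\rhd\beta=\alpha$$
into the defining relations of an \orbsa (Eqs.~(\ref{eq:rbsy1})-(\ref{eq:rbsy2})). Under this substitution, the generic elements
$$\rbsla{\alpha}{\beta}{u}{v}-\rbsray{\alpha}{\beta}{u}{v}-\rbsraz{\alpha}{\beta}{u}{v}$$
and
$$\rbslb{\alpha}{\beta}{u}{v}-\rbsrby{\alpha}{\beta}{u}{v}-\rbsrbz{\alpha}{\beta}{u}{v}$$
of $\mathbb{S}_{\Omega\, S}$ become exactly the two families of elements in $\mathbb{S}_{MS}$, since $\alpha\rightarrow\beta=\beta$, $\alpha\rhd\beta=\alpha$, $\alpha\leftarrow\beta=\alpha$, and $\alpha\lhd\beta=\beta$ translate the subscripts in the expected way.

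With this identification in hand, the first step is to verify that the quadruple $(\Omega,\leftarrow,\rightarrow,\lhd,\rhd)$ with these four operations is an extended diassociative semigroup; this is already asserted in Example~\mref{exam:mRBsystem} and is a purely mechanical check of the fifteen axioms of Definition~\ref{defn:diassociative semigroups}. Once this is verified, Theorem~\mref{thm:gsb}\mref{it:orbsaa} applies and yields that $\mathbb{S}_{\Omega\, S}$ is a Gr\"obner-Shirshov basis in $\bfk\rswx$. Since $\mathbb{S}_{MS}$ is literally $\mathbb{S}_{\Omega\, S}$ under our choice of operations, the first item follows.

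For the second item, one option is to invoke Theorem~\mref{thm:gsb}\mref{it:orbsab} directly and observe that the quotient $\bfk\rswx/\Id(\mathbb{S}_{MS})$ is, by construction and Example~\mref{exam:mRBsystem}, precisely the free \mrbs on $X$: any \mrbs is an \orbsa for the chosen $\Omega$-structure, and conversely an \orbsa with this $\Omega$-structure is nothing but a \mrbs in disguise, so the universal properties coincide. The linear basis statement then follows from Theorem~\ref{Composition-Diamond lemma}.

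The only genuinely delicate point is to make sure that the freeness is correctly transferred: one must argue that the universal property of $\bfk\rswx/\Id(\mathbb{S}_{\Omega\, S})$ as the free \orbsa, when $\Omega$ is equipped with these specific operations, coincides with the universal property of the free \mrbs. This is essentially a formality, since Definition~\mref{defn:matching} is obtained from Definition~\mref{defn:omegarbsystem} by the stated substitution, but it is the step that deserves an explicit sentence in the write-up. Everything else is a direct corollary of the machinery established in Section~\mref{sec:GS-bases}; no new composition computations are needed.
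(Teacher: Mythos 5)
Your proposal is correct and follows essentially the same route as the paper: the paper's own proof simply cites Example~\ref{exam:mRBsystem} together with Theorem~\ref{thm:gsb} for the first item and Theorem~\ref{Composition-Diamond lemma} for the second. Your additional remarks on checking the identification of $\mathbb{S}_{MS}$ with $\mathbb{S}_{\Omega\,S}$ and on transferring the universal property are just explicit versions of steps the paper leaves implicit.
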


\begin{proof}
The first one is obtained by Example~\ref{exam:mRBsystem} and Theorem~\mref{thm:gsb}.
Further, the second one is valid by Theorem~\ref{Composition-Diamond lemma}.
\end{proof}

In particular, if $S_\omega=R_\omega+\lambda_\omega\id\,\text{ for }\,\omega\in\Omega$, we obtain the Gr\"obner-bases for matching Rota-Baxter algebras as follows.

\begin{prop}~\label{prop:fmrba}
 Let $X$ be a set and $\Omega$ a nonempty set. With the monomial order $\leq_\db$ on $\frakM(\Omega, X)$,
\begin{enumerate}
\item\label{it:gsm}  the set
$$
\mathbb{S}_M=\left\{
\left.
 \begin{array}{ll}
\lc u\rc_\alpha \lc v \rc_\beta-\lc \lc u\rc _\alpha v\rc_\beta-\lc u\lc v\rc _\beta\rc_\alpha-\lambda_\beta\lc uv \rc_\alpha \\
\end{array}
\right|u, v\in \frakM(\Omega, X)\,\text{ and }\,\alpha,\beta\in\Omega \right\}
$$
is a Gr\"{o}bner-Shirshov basis in $\bfk\frakM(\Omega, X)$.

\item \label{it:gsmb} the set
$$\irr(\mathbb{S}_{M}):=\Big\{ w\in \frakM(\Omega, X) \mid  w \neq q|_{\overline{s}}
\,  \text{ for  any } \, q \in \frakM^{\star}(\Omega, X)\,  \text{ and any }\, s\in \mathbb{S}_{M} \Big\}$$
is a \bfk-basis of the free matching Rota-Baxter algebra $\bfk\frakM(\Omega, X)/\Id(\bbs_M)$ on $X$.
\end{enumerate}
\end{prop}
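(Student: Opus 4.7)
The plan is to mirror the argument used for Proposition~\ref{prop:rbf}/Proposition~\ref{prop:gsbases}, transferring the already-established Gröbner-Shirshov basis for free \emph{matching Rota-Baxter systems} (Proposition~\ref{prop:fmrb}) down to free matching Rota-Baxter algebras via the degeneration $S_\omega = R_\omega + \lambda_\omega\id$ guaranteed by Proposition~\ref{prop:mesm}.

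First I would realize the free matching Rota-Baxter algebra as an iterated quotient. By Proposition~\ref{prop:mesm}, imposing $S_\omega = R_\omega + \lambda_\omega\id$ on a matching Rota-Baxter system produces precisely a matching Rota-Baxter algebra of weight $(\lambda_\omega)_{\omega\in\Omega}$, and this identification is both necessary and sufficient. Consequently the free matching Rota-Baxter algebra on $X$ is isomorphic to
\[
\bfk\frakM(\Omega_R \sqcup \Omega_S, X)\big/\Id\bigl(\bbs_{MS} \cup \{\obs{u}-\obr{u}-\lambda_\omega u \mid u\in\frakM(\Omega_R\sqcup\Omega_S, X),\ \omega\in\Omega\}\bigr).
\]
Applying the third isomorphism theorem I would first mod out by $\Id(\bbs_{MS})$ and use Proposition~\ref{prop:fmrb} to identify the intermediate quotient with $\bfk\Irr(\bbs_{MS})$. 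Then, modding out by the remaining relations $\obs{u} = \obr{u} + \lambda_\omega u$ eliminates every $S$-bracket in favor of an $R$-bracket plus a lower-degree term; the surviving monomials are exactly those bracketed words in $\frakM(\Omega, X)$ (now with a single bracket family) that avoid the leading monomial $\lc u\rc_\alpha\lc v\rc_\beta$ of $\bbs_M$, yielding a $\bfk$-linear isomorphism onto $\bfk\Irr(\bbs_M)$.

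To make this last step rigorous I would set up an explicit $\bfk$-linear map $\bfk\Irr(\bbs_{MS}) \to \bfk\Irr(\bbs_M)$ that replaces every $\obs{\,}$ inductively by $\obr{\,} + \lambda_\omega\id$ and check that it identifies the two quotients; equivalently, I would observe that every element of $\Irr(\bbs_{MS})$ has a unique normal form after this substitution, expressible as a $\bfk$-combination of elements of $\Irr(\bbs_M)$ strictly decreasing in $\leq_\db$, so linear independence is preserved. Together with the chain of isomorphisms above, this proves Item~\ref{it:gsmb}: $\Irr(\bbs_M)$ is a $\bfk$-basis of the free matching Rota-Baxter algebra.

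Finally, Item~\ref{it:gsm} follows from Item~\ref{it:gsmb} together with the Composition-Diamond lemma (Theorem~\ref{Composition-Diamond lemma}): since $\bfk\Irr(\bbs_M)$ is a complement of $\Id(\bbs_M)$ in $\bfk\frakM(\Omega, X)$, the equivalence \textrm{(III)} $\Leftrightarrow$ \textrm{(I)} forces $\bbs_M$ to be a Gröbner-Shirshov basis with respect to $\leq_\db$. The main obstacle will be the bookkeeping in the second step, namely verifying that the substitution $\obs{\,}\mapsto \obr{\,}+\lambda_\omega\id$ truly sends a basis to a basis; the induction on depth is straightforward but must be done carefully so that the lower-order correction terms $\lambda_\omega u$ do not re-introduce forbidden subwords of the form $\lc u\rc_\alpha\lc v\rc_\beta$. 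Once this bijection is in hand, the rest of the argument is the same clean diagram chase used in the proof of Proposition~\ref{prop:rbf}.
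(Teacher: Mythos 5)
Your proposal is correct and follows essentially the same route as the paper: the paper proves Proposition~\ref{prop:fmrba} by declaring it ``similar to the proof of Proposition~\ref{prop:rbf}'', and your chain of isomorphisms (Proposition~\ref{prop:mesm}, the third isomorphism theorem, Proposition~\ref{prop:fmrb}, elimination of the $S$-brackets via $\obs{u}=\obr{u}+\lambda_\omega u$, then the Composition-Diamond lemma to deduce Item~\ref{it:gsm} from Item~\ref{it:gsmb}) is precisely the matching-algebra analogue of that argument.
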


\begin{proof}
The proof is similar to the one of Proposition~\ref{prop:rbf}.
\end{proof}

\noindent
{{\bf Acknowledgments.} This work is supported in part by Natural Science Foundation of China (No. 12070091, 12101183), project funded by China
Postdoctoral Science Foundation (No. 2021M690049) and the Natural Science Project of Shaanxi Province (No. 2022JQ-035).

\medskip

\end{document}